\tikzstyle{block}=[draw opacity=0.7,line width=1.4cm]
\newtheorem{theorem}{Theorem}
\newtheorem{definition}{Definition}
\newtheorem{lemma}{Lemma}
\numberwithin{equation}{section}
\numberwithin{theorem}{section}
\numberwithin{lemma}{section}
\numberwithin{corollary}{section}
\numberwithin{definition}{section}
\numberwithin{example}{section}
\numberwithin{remark}{section}
\numberwithin{property}{section}
\numberwithin{proposition}{section}
\newcommand{\lift}[3][\pi]{\xymatrix{#2 \ar@{~>}[r]_{#1} & #3}}
\begin{document}
\title[Topologically Controlled Model Order Reduction]{On Topologically Controlled Model Reduction for Discrete-Time Systems}
\author{Fredy Vides}
\address{Scientific Computing Innovation Center, Universidad Nacional Aut\'onoma de Honduras, Tegucigalpa, Honduras }

\email{fredy.vides@unah.edu.hn}

\keywords{ Group algebra, circulant matrices, discrete-time system, topological dynamics.}

\subjclass[2010]{93B28, 47N70 (primary) and   93C57, 93B25 (secondary)} 

\date{\today}

\begin{abstract}
In this document the author proves that several problems in data-driven numerical approximation of dynamical systems in $\mathbb{C}^n$, can be reduced to the computation of a family of constrained matrix representations of elements of the
group algebra $\mathbb{C}[\mathbb{Z}/m]$ in $\mathbb{C}^{n\times n}$, factoring through the commutative
algebra $Circ(m)$ of circulant matrices in $\mathbb{C}^{m\times m}$, for some integers $m\leq n$.

The solvability of the previously described matrix representation problems is studied. Some
connections of the aforementioned results, with numerical analysis of dynamical systems, are outlined, a prorotypical algorithm for the computation of the matrix representations, and some numerical implementations of the algorithm, will be presented.
\end{abstract}

\maketitle

\section{Introduction}

In this document we will study discrete-time dynamical systems determined by the pair $(\Sigma,\{\Theta_t\})$, with $\Sigma\subseteq \mathbb{C}^n$, and where $\{\Theta_t|t\in \mathbb{Z}\}$ is a family of continuous functions from $\Sigma$ to $\Sigma$, such that $\Theta_t\circ\Theta_s=\Theta_{t+s}$ and $\Theta_0=\mathrm{id}_\Sigma$, for every pair of integers $t,s\geq 0$.

We will prove that several important problems in numerical analysis and data-driven discovery of discrete-time dynamical systems of the form $(\Sigma,\{\Theta_t\})$ in $\mathbb{C}^n$, can be reduced to the computation of a family of discrete-time transition matrices $\{F_t\}_{t=1}^{m-1} \subseteq \rho_m(\mathbb{C}[\mathbb{Z}/m]) \subset \mathbb{C}^{n\times n}$ of rank at most $m$ with $m\leq n$, for some matrix representation of the group algebra $\mathbb{C}[\mathbb{Z}/m]$, together with two matrices $\hat{K},\hat{T}\in \mathbb{C}^{n\times n}$ of rank at most $m$, that are related to some evolution history data $\{v_1,\ldots,v_m\}\subseteq \Sigma\subseteq \mathbb{C}^n$, (approximately) generated by the dynamical $(\Sigma,\{\Theta_t\})$, by the equations $v_{t+1}=\hat{K}F_t\hat{T}v_t$ for $1\leq t\leq m-1$.

We will also show that each variation of the problem corresponding to the computation of the
aforementioned transition matrices, can be reduced to solving a constrained matrix representation problem of the
group algebra $\mathbb{C}[\mathbb{Z}/m]$ in $\mathbb{C}^{n\times n}$, factoring through the commutative
algebra $Circ(m)$ of circulant matrices in $\mathbb{C}^{m\times m}$, for some integers $m\leq n$.

The motivation for the theoretical and computational machinery presented in this document came from some questions raised by M. H. Freedman along the lines of \cite{Freedman}, concerning to the implications in linear algebra and matrix computations of the so called Kirby Torus Trick, presented by R. Kirby in \cite{Kirby}.

We study the solvability of the previously described matrix representation problems. Some
connections of the aforementioned results, with numerical analysis of dynamical systems, are outlined, a prorotypical algorithm for the 
matrix representation computations, and some numerical implementations of the algorithm will be presented.

\section{Preliminaries and Notation}

Given two positive integers $p,q$ such that $p\geq q$, we will write $p~\mathrm{mod}~q$ to denote the integer $r$, such that $p=mq+r$ for some integer $m$. We will write $\mathbb{Z}^+_0$ to denote the set $\{z\in\mathbb{Z}|z\geq 0\}$.

Given $k\in \mathbb{Z}_0^+$, we will write $\mathbb{Z}/k$ to denote the (additive) cyclic group $\mathbb{Z}/k\mathbb{Z}=\{\hat{0},\hat{1},\hat{2},\ldots,\widehat{k-1}\}$.

Given any matrix $X\in \mathbb{C}^{m\times n}$, we will write $X_{ij}$ to denote the $ij$ entry of $X$, and we will write $X^\ast$ to denote its conjugate transpose 
$\overline{X^\top}=(\overline{X_{ji}})\in \mathbb{C}^{n\times m}$. We will identify elements in $\mathbb{C}^n$ with elements in $\mathbb{C}^{n\times 1}$. As a consequence of this identification, given $x,y\in \mathbb{C}^n$, $y^\ast x\in \mathbb{C}$ will determine the Ecuclidean inner product $\langle x,y\rangle$ in $\mathbb{C}^n$, while  $xy^\ast\in \mathbb{C}^{n\times n}$ will determine a rank-one matrix in $\mathbb{C}^{n\times n}$.

In this document we write $\mathbf{1}_n$ and $\mathbf{0}_{m\times n}$ to denote the identity and zero matrices in $\mathbb{C}^{n\times n}$ and $\mathbb{C}^{m\times n}$, respectively. We will write $\mathbf{0}_n$ to denote the zero matrix in $\mathbb{C}^{n\times n}$.

A set of $m$ elements $v_1,\ldots,v_m \in \mathbb{C}\backslash \{\mathbf{0}\}$ is said to be an orthogonal $m$-system if 
\begin{equation}
v_j^\ast v_k=\delta_{j,k} v_j^\ast v_k,
\label{orth-sys-cond-1}
\end{equation}
for $1\leq j,k\leq m$.

From here on, we will write $\delta_{k,j}$ to denote the Kronecker delta defined by
\begin{equation}
\delta_{k,j}=\left\{
\begin{array}{l}
1,\:\:\: k=j,\\
0,\:\:\: k\neq j.
\end{array}
\right.
\label{delta-def}
\end{equation}

We say that the set of vectors $v_1,\ldots,v_m \in \mathbb{C}\backslash \{\mathbf{0}\}$ is an orthonormal $m$-system if 
the vectors $v_j$ satisfy \eqref{orth-sys-cond-1} and in addition
\begin{equation}
\|v_j\|_2= \sqrt{v_j^\ast v_j}=1,
\label{orth-sys-cond-2}
\end{equation}
for $1\leq j\leq m$.

We will write $\hat{e}_{j,n}$ to denote the element in $\mathbb{C}^{n\times 1}$ represented by the expression.
\begin{equation}
\hat{e}_{j,n}=\begin{bmatrix}
\delta_{1,j}\\
\delta_{2,j}\\
\vdots\\
\delta_{k,j}\\
\vdots\\
\delta_{n,j}
\end{bmatrix}
\label{def-ejn}
\end{equation}
Each $\hat{e}_{j,n}$ can be interpreted as the $j$-column of $\mathbf{1}_n=[\hat{e}_{1,n} ~ \hat{e}_{2,n} ~ \cdots ~ \hat{e}_{n,n}]$.

A matrix $B\in \mathbb{C}^{n\times n}$ is said to be normal if $BB^\ast=B^\ast B$, a matrix $A\in \mathbb{C}^{n\times n}$ is said to be Hermitian if $X^\ast=X$, and a hermitian matrix $P$ is said to be an orthogonal projection or just a projection, if $P^2=P=P^\ast$.

A matrix $X\in \mathbb{C}^{n\times n}$ is said to be unitary if $X^\ast X=XX^\ast =\mathbf{1}_n$. A matrix $A\in \mathbb{C}^{n\times m}$ is said to be positive if there is a matrix $B\in \mathbb{C}^{n\times n}$ such that $A=B^\ast B$, we also write $A\geq 0$ to indicate that $A$ is positive. We will denote by $\mathbb{U}(n)$ and $\mathbb{P}(n)$, the sets of unitaries and positive matrices in $\mathbb{C}^{n\times n}$, respectively.

Given a matrix $W\in \mathbb{C}^{m\times n}$, we write $\mathrm{Ad}[W]$ to denote the linear map from $\mathbb{C}^{n\times n}$ to $\mathbb{C}^{m\times m}$, defined by the operation $\mathrm{Ad}[W](X)=WXW^\ast$ for any $X\in \mathbb{C}^{n\times n}$.

We say that a matrix $A\in \mathbb{C}^{n\times n}$ is invertible if there is one matrix $B\in\mathbb{C}^{n\times n}$ such that $AB=BA=\mathbf{1}_n$. We will write $\mathbb{GL}(n)$ to denote the set of invertible matrices in $\mathbb{C}^{n\times n}$. Given a matrix $X\in \mathbb{C}^{n\times n}$, we will write $\sigma(X)$ to denote the spectrum of $X$, that is the set $\{z\in \mathbb{C}|X-z\mathbf{1}_n\notin \mathbb{GL}(n)\}$.

From here on we write $\|\cdot\|_2$ to denote the Euclidean norm in $\mathbb{C}^n$ defined by the operation $\|x\|_2=\sqrt{x^\ast x}$ for any $x\in \mathbb{C}^n$. In this document we will write $\|\cdot\|$ to denote the spectral norm in $\mathbb{C}^{n\times n}$ defined by the opration $\|X\|=\sup_{\|x\|_2=1}\|Ax\|_2$, for any $A\in \mathbb{C}^{n\times n}$.

\begin{definition}
A linear map $\varphi:\mathbb{C}^{n\times n} \to \mathbb{C}^{m\times m}$ is said to be a completely positive (CP) linear map if $\varphi(A)\geq 0$ for every positive $A\in \mathbb{C}^{n\times n}$, and if it has a Choi's representation of the form 
$\varphi=\sum_{j=1}^k\mathrm{Ad}[W_j]$ for some matrices $W_j\in \mathbb{C}^{m\times n}$.
\end{definition}

We will write $CP(n,m)$ to denote the set of completely positve linear maps from $\mathbb{C}^{n\times n}$ to $\mathbb{C}^{m\times m}$.

Given any $X\in \mathbb{C}^{n\times n}$ and any $p\in\mathbb{C}[z]$ determined by the formula $p(z)=a_0+a_1z+a_2z^2+\cdots+a_nz^n$, we will write $p(X)$ to denote the matrix defined by the expression $p(X)=a_0\mathbf{1}_n+a_1X+a_2X^2+\cdots+a_nX^n$.
 
Given $X\in \mathbb{C}^{n\times n}$, we will write $\mathbb{C}[X]$ to denote the commutative algebra determined by the set $\{p(X)|p\in\mathbb{C}[z]\}=\mathrm{span}_{\mathbb{C}}\{\mathbf{1}_n,X,X^2,\ldots,X^{n-1}\}$, with respect to the usual addition and multiplication operations in $\mathbb{C}^{n\times n}$. 
We have that in fact $\mathbb{C}[X]$ is an algebra since, $X^m\in \mathrm{span}_{\mathbb{C}}\{\mathbf{1}_n,X,X^2,\ldots,X^{n-1}\}$ for every integer $m\geq n$, as a consequence of the Cayley-Hamilton Theorem, and that $\mathbb{C}[X]$ is commutative as a consequence of the identity $(aX^k)(bX^j)=abX^{k+j}=baX^{j+k}=bX^jaX^k$, that holds for each $a,b\in \mathbb{C}$ and for each pair of integers $k,j\geq 1$.

We will write $\mathrm{Circ}(k)$ to denote the commutative algebra of $k\times k$ Circulant matrices that is defined by the expression:
\begin{equation}
\mathrm{Circ}(k)=\mathbb{C}[C_k]=\{p(C_k)|p\in \mathbb{C}[z]\}
\label{eq:circulant_def}
\end{equation}
where $C_k$ is the cyclic permutation matrix defined as follows.
\begin{equation}
C_k=
\begin{bmatrix}
\mathbf{0}_{1\times(k-1)} & 1\\
\mathbf{1}_{k-1} & \mathbf{0}_{(k-1)\times 1}
\end{bmatrix}
\label{eq:cyclic_shift_def}
\end{equation}

Given a matrix $X\in \mathbb{C}^{n\times n}$, we write $Z(X)$ to denote the commutant set of $X$ defined by the expression $Z(X)=\{Y\in \mathbb{C}^{n\times n}|XY=YX\}$.

Given a finite group $G$, we will write $\mathbb{C}[G]$ to denote the group algebra over $\mathbb{C}$.

For a finite group $G$ In this document we will focus on group algebra representations of $\mathbb{C}[G]$ determined by algebra homomorphisms of the form $\rho:\mathbb{C}[G]\to \mathbb{C}^{n\times n},\sum_{g\in G}c_g g\mapsto \sum_{g\in G}c_g\rho(g)$, such that $\rho|_G$ is a group representation of $G$ in $\mathbb{GL}(n)$. We will say that an algebra representation $\rho:\mathbb{C}[G]\to \mathbb{C}^{n\times n}$ is unitary if $\rho|_G(G)\subset \mathbb{U}(n)$.

Given a discrete-time dynamical system $(\Sigma,\{\Theta_t\})$, if there is an integer $T>0$ such that 
$\Theta_{t+T}=\Theta_t$ for each every $t\in\mathbb{Z}_0^+$, we say that $(\Sigma,\{\Theta_t\})$ is a discrete-time $T$-periodic dynamical system.

Given a discrete-time dynamical system $(\Sigma,\{\Theta_t\})$, a set of vectors $\mathbb{H}[\Sigma,m]=\{v_1,\ldots,v_m\}\subseteq \Sigma$  will be called a $m$-system of history vectors for $(\Sigma,\{\Theta_t\})$, if they satisfy the relations $v_{k+1}$ $=$ $\Theta_1(v_k)$ $=$ $\Theta_k(v_1)$ for $0\leq k\leq m-1$.

Given $\delta,\varepsilon>0$, we will say that a discrete-time dynamical system $(\Sigma,\{\Theta_t\})$ is $(T,\delta,\varepsilon)$-almost-periodic dynamical system, if there is a $T$-periodic discrete-time dynamical system $(\tilde{\Sigma},\{\tilde{\Theta}_t\})$ such that for each $x\in\Sigma$ there is $\tilde{x}\in \tilde{\Sigma}$ such that $\|x-\tilde{x}\|\leq \delta$, and $\|\Theta_t(x)-\tilde{\Theta}_t(\tilde{x})\|_2\leq \varepsilon$ for each $t\in \mathbb{Z}_0^+$ and every $(x,\tilde{x})\in \Sigma\times \tilde{\Sigma}$ such that $\|x-\tilde{x}\|\leq \delta$.

\section{Topological Control Method (TCM)}
\label{sec:main}

\subsection{Switched Closed Loop Reduced Order Models SCL-ROM}

In this section we will stablish the notion of $\mathbb{S}^1$ topological control considered for this study.

Given a discrete-time dynamical system $(\Sigma,\{\Theta_t\})$ with $\Sigma\subseteq \mathbb{C}^{n\times n}$, and a $m$-system of history vectors $\mathbb{H}[\Sigma,m]=\{v_1,\ldots,v_m\}$ for $(\Sigma,\{\Theta_t\})$, we say that $(\Sigma$ $,$ $\{\Theta_t\}$ $,$ $\mathbb{H}[\Sigma,m])$ is {\bf \em topologically controlled} by a topological manifold $\mathbb{M}\subseteq \mathbb{C}$ or {\bf \em $\mathbb{M}$-controlled}, if there is a matrix $Z\in \mathbb{C}^{n\times n}$ with $\sigma(Z)\subseteq \mathbb{M}$, an algebra homomorphism $\varphi:\mathbb{C}[Z]\to \mathbb{C}^{n\times n}$, a family of polynomials $\{f_0,\ldots,f_{m-1}\}$, and two projections $K,T\in \mathbb{C}^{n\times n}$ such that $\Theta_k(v_1)=K\varphi(f_k(Z))Tv_1$, for each $k\geq 0$. We will call the $6$-tuple $(\mathbb{M},Z,K,T,\varphi,\{f_t\})$ a topological control for $(\Sigma,\{\Theta_t\},\mathbb{H}[\Sigma,m])$.

Given $\varepsilon>0$ and manifold $\mathbb{M}\subseteq \mathbb{C}$, and a $\mathbb{M}$-controlled discrete-time dynamical system $(\Sigma,\{\Theta_t\})$ with $\Sigma\subseteq \mathbb{C}^{n\times n}$, and a topological control $(\mathbb{M},Z,K,T$ $,$ $\varphi$ $,$ $\{f_t\})$ for $(\Sigma,\{\Theta_t\},\mathbb{H}[\Sigma,m])$, we say that $(\mathbb{M},Z,K,T,\varphi,\{f_t\})$ is a {\bf \em control of order $k$}, if there is an integer $k>0$, together with maps $\Pi_k\in Cp(n,k)$, $\Phi\in CP(k,n)$, such that $\|\Theta_k(v_1)-K\varphi(f_k(Z))T\tilde{v}\|\leq \varepsilon$ and $\|\varphi(X)-\Phi\circ \Pi_k(X)\|\leq \varepsilon$, for each $f_k$, each $X\in C[Z]$, and some $\tilde{v}\in \mathbb{C}^n$. In this case we say that $(\Sigma,\{\Theta_t\},\mathbb{H}[\Sigma,m])$ is {\bf \em $(\mathbb{M},k,\varepsilon)$-controlled}.

Given $\varepsilon>0$, a discrete-time dynamical system $(\Sigma,\{\Theta_t\})$ with $\Sigma\subseteq \mathbb{C}^{n\times n}$, and a $m$-system of history vectors $\mathbb{H}[\Sigma,m]=\{v_1,\ldots,v_m\}$ for $(\Sigma,\{\Theta_t\})$, we say that $(\Sigma,\{\Theta_t\},\mathbb{H}[\Sigma,m])$ is {\bf \em $\varepsilon$-approximately topologically controlled by $\mathbb{Z}/m$} or {\bf \em $(\mathbb{Z}/m,\varepsilon)$-controlled}, if there is a unitary representation $\rho_m:\mathbb{C}[\mathbb{Z}/m]\to \mathbb{C}^{m\times m}$, an algebra homomorphism $\varphi:\mathbb{C}^{m\times m}\to \mathbb{C}^{n\times n}$, a family of functions $\{f_0,\ldots,f_{m-1}\}$, and two projections $K,T\in \mathbb{C}^{n\times n}$ such that $\|\Theta_k(v_1)-K\varphi\circ\rho_m(f_k(\hat{1}))T\tilde{v}\|\leq \varepsilon$, for each $k\geq 0$ and some $\tilde{v}\in \mathbb{C}^n$, with $\hat{1}\in \mathbb{Z}/m$.

For a given a discrete-time dynamical system $(\Sigma,\{\Theta_t\})$ with $\Sigma\subseteq \mathbb{C}^{n\times n}$, and a $m$-system of history vectors $\mathbb{H}[\Sigma,m]=\{v_1,\ldots,v_m\}$ for $(\Sigma,\{\Theta_t\})$, we approach the local controllability of $(\Sigma,\{\Theta_t\})$ by computing a switched closed loop control system $(\hat{\Sigma},\{\hat{\Theta}_t\})$ in the sense of \cite[\S4.2,Example 4.2]{bilinear_systems}, determined by the decomposition.
\begin{equation}
\hat{\Sigma}:\left\{
\begin{array}{l}
\hat{v}_{1}=\alpha \hat{T}v_1\\
\hat{v}_{t+1}=\hat{F}_t\hat{v}_t\\
\tilde{v}_{t+1}=\beta\hat{K}\hat{v}_{t+1}
\end{array}
\right.,t\geq 0
\label{eq:loc_bil_rep_sigma}
\end{equation} 
for some $(\alpha,\beta,\hat{K},\{\hat{F}_t\},\hat{T})\in \mathbb{C}\times \mathbb{C}\times \mathbb{C}^{n\times n}\times \mathbb{C}^{n\times n}\times \mathbb{C}^{n\times n}$ to be determined. Given $\varepsilon>0$, the discrete-time system \eqref{eq:loc_bil_rep_sigma} is called a $\varepsilon$-approximate swithed closed-loop reduced order model ({\bf SCL-ROM}) of $(\Sigma,\{\Theta_t\})$, if $\|\hat{v}_t-\Theta_t(v_1)\|_2\leq \varepsilon$ for each $t\in\mathbb{Z}_0^+$.

\subsection{Some Connections with Dynamic Mode Decomposition}

Given $\varepsilon>0$, an integer $T>0$, and a discrete-time system $(\Sigma,\{\Theta_t\})$ in $\mathbb{C}^n$. Let us consider the evolution history determined by the difference equations.
\begin{equation}
\Sigma:\left\{
\begin{array}{l}
x_{t+1}=\Theta_t(x_t),\\
x_{0}=x
\end{array}
\right., t\in \mathbb{Z}_0^+
\label{thm_sigma}
\end{equation}
for some $x\in\Sigma$. 

Given $\varepsilon>0$. The computation of a {SCL-ROM} local $\varepsilon$-approximant $\tilde{\Sigma}$ of $\Sigma$, with respect to some sampled-data history $\{x_t\}$ of $\Sigma$, is related to the computations of closed-loop matrix realizations $H_t\in \mathbb{C}^{n\times n}$ and triples $(P_H(t),Q_H(t),F_H(t))\in \mathbb{C}^{n\times n}\times \mathbb{C}^{n\times n}\times \mathbb{C}^{n\times n}$ such that.
\begin{equation}
\left\{
\begin{array}{l}
\|(P_H(t)F_H(t)-H_tP_H(t))Q_H(t)\|\leq\varepsilon,\\
\|P_H(t) H_t-H_tP_H(t)\|=0,\\
\|P_H(t) x_{t}-x_{t}\|=0, \\
P_H(t)^2=P_H(t)=P_H(t)^\ast ,\\
Q_H(t)^2=Q_H(t)=Q_H(t)^\ast, \\
F_H(t)^\ast F_H(t)=F_H(t) F_H(t)^\ast
\end{array},0\leq t\leq T-1
\right.
\label{approx_constraints}
\end{equation}

The matrices $\{H_t\}$ in \eqref{approx_constraints} are determined by the connecting operator $\mathbb{K}$ for the sampled-data history $\{x_t\}$, in the sense of \cite[\S2]{DMD_Schmid} and \cite[\S2.2]{DMD_Kutz}, that satisfies the equations $x_{t+1}=\mathbb{K}x_t$, $0\leq t\leq T-1$, In particular we will consider $H_t=\mathbb{K}$, $t\in\mathbb{Z}_0^+$. The objective of topological control methods is to compute  matrix realizations $(F_H(t),K,\hat{T}(t))\in \mathbb{C}^{n\times n}\times \mathbb{C}^{n\times n}\times \mathbb{C}^{n\times n}$ such that: 
\begin{displaymath}
\|KF_H(t)\hat{T}(t)x_t-y_t\|\leq \varepsilon
\end{displaymath}
for $0\leq t\leq T-1$.

Given $\varepsilon>0$, an integer $T>0$, a family of vectors $\{x_t\}_{t=0}^{T-1}$ in $\mathbb{C}^{n}$ and matrices $H_t\in \mathbb{C}^{n\times n}$ determined by the connecting operator $\mathbb{K}$ for $\{x_t\}$.  For any triples $(P_H(t),Q_H(t),F_H(t))\in \mathbb{C}^{n\times n}\times \mathbb{C}^{n\times n}\times \mathbb{C}^{n\times n}$ that satisfy 
\eqref{approx_constraints}.

Let us consider the structured matrix equations in $\mathbb{C}^{2n\times 2n}$ determined by:
\begin{equation}
\left\{
\begin{array}{l}
(Q(t)X(t)-X(t)Q(t))P(t)=\mathbf{0}_{2n}\\
Q(t)^4=Q(t)^2\\
Q(t)^2=ZQ(t)=(Q(t)^2)^\ast\\
\end{array}
\right.,
0\leq t\leq T-1
\label{main_matrix_equation}
\end{equation}
where $P(t)$ and $Z$ have the form.
\begin{displaymath}
P(t)=\begin{bmatrix}
\mathbf{1}_n-P_H(t) & \mathbf{0}_n\\
\mathbf{0}_n & Q_H(t)
\end{bmatrix}
\:\:\:, \:\:\:
Z=\begin{bmatrix}
\mathbf{0}_n & \mathbf{1}_n\\
\mathbf{1}_n & \mathbf{0}_n
\end{bmatrix}
\end{displaymath}
If we set.
\begin{displaymath}
\hat{Q}(t)=\begin{bmatrix}
\mathbf{0}_n & P_H(t)\\
P_H(t) & \mathbf{0}_n
\end{bmatrix}\:\:\:,\:\:\:
\hat{X}(t)=\begin{bmatrix}
H_t & \mathbf{0}_n\\
\mathbf{0}_n & {F}_H(t)
\end{bmatrix}
\end{displaymath}
It can be seen that $(\hat{Q}(t),\hat{X}(t))\in (\mathbb{C}^{2n\times 2n})\backslash \{\mathbf{0}_{2n}\})^2$  are $\varepsilon$-approximate solvent pairs for \eqref{main_matrix_equation} in the sense that, $\|(\hat{Q}(t)\hat{X}(t)-\hat{X}(t)\hat{Q}(t))P(t)\|\leq\varepsilon$, $Q(t)^4=Q(t)^2$ and $Q(t)^2=ZQ(t)=(Q(t)^2)^\ast$ for $0\leq t\leq T-1$.

Given a positive integer $T$, together with $\varepsilon,\delta>0$. In this first paper on the subject of topological control of discrete-time systems, we focus on the computation of transition mappings $F_H(t)$, avoiding an explicit computation of the connecting operator $\mathbb{K}$, instead we compute the family $\{F_H(t)\}$ by solving some constrained representation problems for $\mathbb{C}[\mathbb{Z}/T]$ for a given integer $T>0$, restricting our attention to almost $(T,\varepsilon,\delta)$-periodic discrete-time systems.

\subsection{Main Objectives}

We prove the solvability of the problem of finding a $\varepsilon$-approximate SCL-ROM $\tilde{\Sigma}$ described by \eqref{eq:loc_bil_rep_sigma} for a discrete-time system $(\Sigma,\{\Theta_t\})$ with a $m$-system of history vectors $\mathbb{H}[\Sigma,m]=\{v_1,\ldots,v_m\}\subseteq \Sigma$, by computing matrix representations of $\mathbb{C}[\mathbb{Z}/T]$ such that the switching law of the family $\{F_t\}$ is controlled by some family $\{f_t\}\subset \mathbb{C}[\mathbb{Z}/T]$ subject to almost time-perdiodic constraints on $(\Sigma,\{\Theta_t\})$. We will then design and implement a prototypical numerical algorithm that numerically solves the aforementioned problems.

\subsection{Topologically Controlled Model Order Reduction}

\label{some_results}

Let us consider an orthogonal $m$-system $v_{1},\cdots,v_{m}\in\mathbb{C}^{n\times1}\backslash\{\mathbf{0}\}$ with
$m\leq n$. From here on, we will write $C[v_{1}|v_{m}]$ to denote the matrix
in $\mathbb{C}^{n\times n}$ defined by the equation.

\begin{equation}
C[v_{1}|v_{m}]=\frac{1}{v_{m}^{\ast}v_{m}}v_{1}v_{m}^{\ast}+\sum_{j=1}^{m-1}\frac{1}{v_{j}^{\ast}v_{j}}v_{j+1}v_{j}^{\ast}+\mathbf{1}_n-P[v_1|v_m]
\label{p-cycle-def}
\end{equation}
where $P[v_1|v_m]$ is the matrix in $\mathbb{C}^{n\times n}$
defined by the equation.

\begin{equation}
P[v_1|v_m]=\sum_{j=1}^{m}\frac{1}{v_{j}^{\ast}v_{j}}v_{j}v_{j}^{\ast}
\label{p-cycle-proj-def}
\end{equation}

\begin{lemma}
\label{First_Projective_lemma}
Given an orthogonal $m$-system $v_{1},\cdots,v_{m}\in\mathbb{C}^{n\times1}\backslash\{0\}$ with $m\leq n$. The matrix $P[v_1|v_m]$ defined by \eqref{p-cycle-proj-def} is an orthogonal projection such that $P[v_1|v_m]v_j$ $=$ $v_j$, $1\leq j\leq m$. Moreover, $\mathbf{1}_n-P[v_1|v_m]$ is an orthogonal projection such that $P[v_1|v_m](\mathbf{1}_n-P[v_1|v_m])=(\mathbf{1}_n-P[v_1|v_m])P[v_1|v_m]=\mathbf{0}_n$
\end{lemma}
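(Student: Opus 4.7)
My approach is to exploit the orthogonality condition \eqref{orth-sys-cond-1} to force all cross terms in $P[v_1|v_m]^2$ to vanish, reducing the double sum to the original single sum.

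First I would verify Hermiticity of $P[v_1|v_m]$. Since each scalar $v_j^\ast v_j$ is a positive real number and $(v_j v_j^\ast)^\ast = v_j v_j^\ast$, each summand $\frac{1}{v_j^\ast v_j} v_j v_j^\ast$ is Hermitian, and hence so is the sum. Next I would compute
\begin{equation*}
P[v_1|v_m]^2 = \sum_{j=1}^m \sum_{k=1}^m \frac{1}{(v_j^\ast v_j)(v_k^\ast v_k)} v_j (v_j^\ast v_k) v_k^\ast.
\end{equation*}
By \eqref{orth-sys-cond-1}, $v_j^\ast v_k = \delta_{j,k} v_j^\ast v_j$, so every off-diagonal term ($j \neq k$) drops out, and each diagonal term simplifies to $\frac{1}{v_j^\ast v_j} v_j v_j^\ast$. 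Thus $P[v_1|v_m]^2 = P[v_1|v_m]$, so $P[v_1|v_m]$ is an orthogonal projection.

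To check that $P[v_1|v_m] v_k = v_k$ for $1 \leq k \leq m$, I would apply the same orthogonality identity to
\begin{equation*}
P[v_1|v_m] v_k = \sum_{j=1}^m \frac{1}{v_j^\ast v_j} v_j (v_j^\ast v_k) = \sum_{j=1}^m \frac{\delta_{j,k} v_j^\ast v_j}{v_j^\ast v_j} v_j = v_k.
\end{equation*}

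Finally, the statements about $\mathbf{1}_n - P[v_1|v_m]$ are formal consequences: $(\mathbf{1}_n - P[v_1|v_m])^\ast = \mathbf{1}_n - P[v_1|v_m]^\ast = \mathbf{1}_n - P[v_1|v_m]$, and $(\mathbf{1}_n - P[v_1|v_m])^2 = \mathbf{1}_n - 2 P[v_1|v_m] + P[v_1|v_m]^2 = \mathbf{1}_n - P[v_1|v_m]$, while $P[v_1|v_m](\mathbf{1}_n - P[v_1|v_m]) = P[v_1|v_m] - P[v_1|v_m]^2 = \mathbf{0}_n$, and symmetrically for the reversed product. The only substantive step is the orthogonality-driven collapse of the double sum; the rest is bookkeeping, so I do not expect any real obstacle here.
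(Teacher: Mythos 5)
Your proof is correct and follows essentially the same route as the paper's: the paper first names each rank-one piece $V_j=\frac{1}{v_j^\ast v_j}v_jv_j^\ast$, verifies $V_j^\ast=V_j=V_j^2$ and $V_jV_k=\delta_{j,k}V_jV_k$, and then sums, whereas you expand $P[v_1|v_m]^2$ as a double sum directly and invoke \eqref{orth-sys-cond-1} to kill the cross terms — the same computation without the intermediate notation. The remaining verifications (Hermiticity, $Pv_k=v_k$, and the properties of $\mathbf{1}_n-P$) are identical in both.
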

\begin{proof}
Given an orthogonal $m$-system $v_{1},\cdots,v_{m}\in\mathbb{C}^{n\times1}\backslash\{0\}$ with $m\leq n$. For each $1\leq j\leq m$, let us set
\begin{equation}
V_j=\frac{1}{v_{j}^{\ast}v_{j}}v_{j}v_{j}^{\ast},
\label{Vj-def}
\end{equation}
it is clear that each $V_j$ satisfies the relation,
\begin{equation}
V_j^\ast=V_j,
\label{Vj-proj-ident-1}
\end{equation}
and we will also have that,
\begin{equation}
V_j^2=\frac{1}{v_{j}^{\ast}v_{j}}v_{j}v_{j}^{\ast}\frac{1}{v_{j}^{\ast}v_{j}}v_{j}v_{j}^{\ast}=\frac{v_{j}^{\ast}v_{j}}{(v_{j}^{\ast}v_{j})^2}v_jv_j^\ast=\frac{1}{v_{j}^{\ast}v_{j}}v_jv_j^\ast=V_j.
\label{Vj-proj-ident-2}
\end{equation}
By \eqref{Vj-proj-ident-1} and \eqref{Vj-proj-ident-2}, we will have that $V_j^\ast=V_j=V_j^2$, and this implies that each $V_j$ is projection, and by orthogonality ot the system $v_1,\ldots,v_m$, we have that
\begin{equation}
V_jV_k=\frac{1}{v_{j}^{\ast}v_{j}}v_{j}v_{j}^{\ast}\frac{1}{v_{k}^{\ast}v_{k}}v_{k}v_{k}^{\ast}=\frac{v_{j}^{\ast}v_{k}}{v_{j}^{\ast}v_{j}v_{k}^{\ast}v_{k}}v_jv_k^\ast=\frac{\delta_{j,k}v_{j}^{\ast}v_{k}}{v_{j}^{\ast}v_{j}v_{k}^{\ast}v_{k}}v_jv_k^\ast=\delta_{j,k}V_{j}V_k.
\label{Vj-proj-ident-3}
\end{equation}
This implies that the projections $V_1,\ldots,V_m$ are mutually orthogonal projections, and it can be seen that $(\sum_{j}V_j)^\ast=\sum_{j}V_j^\ast=\sum_{j}V_j$. Moreover, 
\begin{equation}
P[v_1|v_m]^2=(\sum_{j=1}^mV_j)^2=\sum_{j,k=1}^mV_jV_k=\sum_{j=1}^mV_j^2=\sum_{j=1}^mV_j=P[v_1|v_m].
\label{Vj-proj-ident-4}
\end{equation}
We also have that.
\begin{equation}
(P[v_1|v_m])^\ast=\left(\sum_{j=1}^mV_j\right)^\ast=\sum_{j=1}^mV_j^\ast=\sum_{j=1}^mV_j=P[v_1|v_m].
\label{Vj-proj-ident-5}
\end{equation}
By \eqref{p-cycle-proj-def} and by orthogonality of the system $v_1,\ldots,v_m$, we will have that for $1\leq j\leq m$,
\begin{eqnarray}
P[v_1|v_m]v_j=\sum_{k=1}^{m}\frac{1}{v_{k}^{\ast}v_{k}}v_{k}(v_{k}^{\ast}v_j)=\sum_{k=1}^{m}\frac{1}{v_{k}^{\ast}v_{k}}v_{k}(\delta_{k,j}v_{k}^{\ast}v_j)=\frac{1}{v_j^\ast v_j}v_j(v_j^\ast v_j)=v_j.
\label{Vj-proj-ident-6}
\end{eqnarray}
By \eqref{Vj-proj-ident-4} we will have that.
\begin{eqnarray}
(\mathbf{1}_n-P[v_1|v_m])^2&=&\mathbf{1}_n-2P[v_1|v_m]+P[v_1|v_m]^2\nonumber\\
&=&\mathbf{1}_n-2P[v_1|v_m]+P[v_1|v_m]=\mathbf{1}_n-P[v_1|v_m]
\label{Vj-proj-ident-7}
\end{eqnarray}
As a consequence of \eqref{Vj-proj-ident-5} we will also have that.
\begin{equation}
(\mathbf{1}_n-P[v_1|v_m])^\ast=\mathbf{1}_n-P[v_1|v_m]^\ast=\mathbf{1}_n-P[v_1|v_m]
\label{Vj-proj-ident-8}
\end{equation}
By \eqref{Vj-proj-ident-4} we will have that,
\begin{equation}
(\mathbf{1}_n-P[v_1|v_m])P[v_1|v_m]=P[v_1|v_m]-P[v_1|v_m]^2=P[v_1|v_m]-P[v_1|v_m]=\mathbf{0}_n
\label{Vj-proj-ident-9}
\end{equation}
and also that.
\begin{equation}
P[v_1|v_m](\mathbf{1}_n-P[v_1|v_m])=P[v_1|v_m]-P[v_1|v_m]^2=P[v_1|v_m]-P[v_1|v_m]=\mathbf{0}_n
\label{Vj-proj-ident-10}
\end{equation}
This completes the proof.
\end{proof}

\begin{lemma}
\label{lema-tool-2}
Given an orthogonal $m$-system $v_{1},\cdots,v_{m}\in\mathbb{C}^{n\times1}\backslash\{0\}$ with $m\leq n$. The matrices $C[v_{1}|v_{m}]$ and $P[v_1|v_m]$ defined by \eqref{p-cycle-def} and \eqref{p-cycle-proj-def}, respectively, satisfy the following conditions:
\begin{itemize}
\item $C[v_{1}|v_{m}]v_j=v_{j+1}$, $1\leq j\leq m-1$,
\item $C[v_{1}|v_{m}]v_m=v_1$, 
\item $C[v_{1}|v_{m}]P[v_1|v_m]=P[v_1|v_m]C[v_{1}|v_{m}]=P[v_1|v_m]C[v_{1}|v_{m}]P[v_1|v_m]$,
\item $P[v_1|v_m]C[v_{1}|v_{m}]P[v_1|v_m]=\frac{1}{v_m^\ast v_m}v_1v_m^\ast+\sum_{j=1}^{m-1}\frac{1}{v_j^\ast v_j}v_{j+1}v_j^\ast$,
\item $C[v_{1}|v_{m}](\mathbf{1}_n-P[v_1|v_m])=(\mathbf{1}_n-P[v_1|v_m])C[v_{1}|v_{m}]=\mathbf{1}_n-P[v_1|v_m]$.
\end{itemize}
\end{lemma}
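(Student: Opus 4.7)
The plan is to split $C[v_1|v_m]$ as $C[v_1|v_m] = W + (\mathbf{1}_n - P[v_1|v_m])$, where
\[
W = \frac{1}{v_m^\ast v_m}v_1v_m^\ast + \sum_{j=1}^{m-1}\frac{1}{v_j^\ast v_j}v_{j+1}v_j^\ast,
\]
so that every bullet reduces to (a) the action of $W$ on the basis vectors $v_1,\ldots,v_m$ and (b) the composition of $W$ with $P[v_1|v_m]$ on either side. For (a), using the orthogonal $m$-system identity $v_k^\ast v_j = \delta_{k,j}\, v_k^\ast v_k$ from \eqref{orth-sys-cond-1}, a direct computation yields $Wv_j = v_{j+1}$ for $1\le j\le m-1$ and $Wv_m = v_1$. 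Since Lemma \ref{First_Projective_lemma} gives $P[v_1|v_m] v_j = v_j$, the identity-part contribution $(\mathbf{1}_n-P[v_1|v_m])v_j$ vanishes, and the first two bullets follow immediately.

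For the commutation identities in bullets three and four, I would establish two structural facts about $W$. First, $W(\mathbf{1}_n-P[v_1|v_m]) = \mathbf{0}_n$: every rank-one summand of $W$ has the form $\alpha\,u\,v_k^\ast$ with $k\in\{1,\ldots,m\}$, and self-adjointness of $P[v_1|v_m]$ together with $P[v_1|v_m]v_k=v_k$ (both from Lemma \ref{First_Projective_lemma}) give $v_k^\ast (\mathbf{1}_n-P[v_1|v_m]) = v_k^\ast - (P[v_1|v_m] v_k)^\ast = 0$. Dually, $P[v_1|v_m] W = W$, because each left factor of a summand of $W$ lies in $\{v_1,\ldots,v_m\}$, on which $P[v_1|v_m]$ acts as the identity. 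Consequently $WP[v_1|v_m] = W = P[v_1|v_m] W$, and hence $P[v_1|v_m] W P[v_1|v_m] = W$. Combining these with $P[v_1|v_m](\mathbf{1}_n-P[v_1|v_m]) = (\mathbf{1}_n-P[v_1|v_m])P[v_1|v_m] = \mathbf{0}_n$ from Lemma \ref{First_Projective_lemma}, the three expressions in bullet three all collapse to $W$, and bullet four is then just the explicit formula for $W$.

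The fifth bullet uses the same two structural facts. From $W(\mathbf{1}_n-P[v_1|v_m]) = \mathbf{0}_n$ and $(\mathbf{1}_n-P[v_1|v_m])^2 = \mathbf{1}_n - P[v_1|v_m]$ (Lemma \ref{First_Projective_lemma}) one obtains
\[
C[v_1|v_m](\mathbf{1}_n-P[v_1|v_m]) = W(\mathbf{1}_n-P[v_1|v_m]) + (\mathbf{1}_n-P[v_1|v_m])^2 = \mathbf{1}_n-P[v_1|v_m],
\]
and symmetrically on the left using $(\mathbf{1}_n-P[v_1|v_m])W = \mathbf{0}_n$. The main obstacle is purely bookkeeping: keeping track of the Kronecker-delta cancellations in $Wv_j$ and of the separate treatment of the wrap-around summand $\tfrac{1}{v_m^\ast v_m}v_1v_m^\ast$; no conceptual difficulty beyond Lemma \ref{First_Projective_lemma} is anticipated.
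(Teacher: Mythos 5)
Your proof is correct and takes essentially the same approach as the paper: both reduce to direct term-by-term computation using the orthogonality relation \eqref{orth-sys-cond-1} together with Lemma \ref{First_Projective_lemma}, and your two structural facts $P[v_1|v_m]W = W$ and $WP[v_1|v_m] = W$ are precisely the content of the paper's explicit evaluations of $P[v_1|v_m]C[v_1|v_m]$ and $C[v_1|v_m]P[v_1|v_m]$ (the latter done there via an adjoint trick). Organizing around the decomposition $C[v_1|v_m] = W + (\mathbf{1}_n - P[v_1|v_m])$ makes the bookkeeping slightly tighter but does not change the underlying argument.
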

\begin{proof}
Since $C[v_1|v_m]$ is defined by \eqref{p-cycle-def}, by \cref{First_Projective_lemma}, we will have that for $1\leq j\leq m$:
\begin{eqnarray}
C[v_1|v_m]v_j&=&\frac{1}{v_m^\ast v_m}v_1(v_m^\ast v_j)+\sum_{j=1}^{m-1}\frac{1}{v_k^\ast v_k}v_{k+1}(v_{k}^\ast v_j)+v_j-P[v_1|v_m]v_j\nonumber\\
&=&\frac{\delta_{m,j}v_m^\ast v_j}{v_m^\ast v_m}v_1+\sum_{j=1}^{m-1}\frac{\delta_{k,j}v_k^\ast v_j}{v_k^\ast v_k}v_{k+1}+\mathbf{0}\nonumber\\
&=&\left\{
\begin{array}{l}
v_{j+1},\:\:\: 1\leq j\leq m-1,\\
v_1, \:\:\: j=m.
\end{array}
\right.
\end{eqnarray}

By \cref{First_Projective_lemma}, and by \eqref{p-cycle-def} and \eqref{p-cycle-proj-def}, on one hand we will have that,
\begin{eqnarray}
P[v_1|v_m]C[v_1|v_m]&=&P[v_1|v_m](\frac{1}{v_m^\ast v_m}v_1v_m^\ast+\sum_{j=1}^{m-1}\frac{1}{v_j^\ast v_j}v_{j+1}v_{j}^\ast+\mathbf{1}_n-P[v_1|v_m])\nonumber\\
&=&\frac{1}{v_m^\ast v_m}(P[v_1|v_m]v_1)v_m^\ast+\sum_{j=1}^{m-1}\frac{1}{v_j^\ast v_j}(P[v_1|v_m]v_{j+1})v_{j}^\ast\nonumber\\
&&+P[v_1|v_m](\mathbf{1}_n-P[v_1|v_m])\nonumber\\
&=&\frac{1}{v_m^\ast v_m}v_1v_m^\ast+\sum_{j=1}^{m-1}\frac{1}{v_j^\ast v_j}v_{j+1}v_{j}^\ast+\mathbf{0}_n\nonumber\\
&=&\frac{1}{v_m^\ast v_m}v_1v_m^\ast+\sum_{j=1}^{m-1}\frac{1}{v_j^\ast v_j}v_{j+1}v_{j}^\ast
\label{P1m-cond-1}
\end{eqnarray}
on the other hand we will have that.
\begin{eqnarray}
C[v_1|v_m]P[v_1|v_m]&=&(\frac{1}{v_m^\ast v_m}v_1v_m^\ast+\sum_{j=1}^{m-1}\frac{1}{v_j^\ast v_j}v_{j+1}v_{j}^\ast+\mathbf{1}_n-P[v_1|v_m])P[v_1|v_m]^\ast\nonumber\\
&=&\frac{1}{v_m^\ast v_m}v_1(v_m^\ast P[v_1|v_m]^\ast)+\sum_{j=1}^{m-1}\frac{1}{v_j^\ast v_j}v_{j+1}(v_{j}^\ast P[v_1|v_m]^\ast)\nonumber\\
&&+(\mathbf{1}_n-P[v_1|v_m])P[v_1|v_m]^\ast\nonumber\\
&=&\frac{1}{v_m^\ast v_m}v_1(P[v_1|v_m]v_m)^\ast+\sum_{j=1}^{m-1}\frac{1}{v_j^\ast v_j}v_{j+1}(P[v_1|v_m]v_j)^\ast+\nonumber\\
&&+(\mathbf{1}_n-P[v_1|v_m])P[v_1|v_m]\nonumber\\
&=&\frac{1}{v_m^\ast v_m}v_1v_m^\ast+\sum_{j=1}^{m-1}\frac{1}{v_j^\ast v_j}v_{j+1}v_j^\ast+\mathbf{0}_n\nonumber\\
&=&\frac{1}{v_m^\ast v_m}v_1v_m^\ast+\sum_{j=1}^{m-1}\frac{1}{v_j^\ast v_j}v_{j+1}v_j^\ast
\label{P1m-cond-2}
\end{eqnarray}
By combining \eqref{P1m-cond-1} and \eqref{P1m-cond-2} we have that 
\begin{equation}
P[v_1|v_m]C[v_1|v_m]=C[v_1|v_m]P[v_1|v_m]
\label{P1m-cond-3}
\end{equation}
Since $P[v_1|v_m]$ is an orthogonal projection, we will also have that.
\begin{equation}
P[v_1|v_m]C[v_1|v_m]=P[v_1|v_m]P[v_1|v_m]C[v_1|v_m]=P[v_1|v_m]C[v_1|v_m]P[v_1|v_m]
\label{P1m-cond-4}
\end{equation}
By \eqref{P1m-cond-4} and \eqref{P1m-cond-1}, it can be seen that $C[v_1|v_m]$ can be represented in the form.
\begin{equation}
C[v_1|v_m]=P[v_1|v_m]C[v_1|v_m]P[v_1|v_m]+\mathbf{1}_n-P[v_1|v_m]
\label{P1m-cond-5}
\end{equation}
By \cref{First_Projective_lemma}, we will have that,
\begin{eqnarray}
(\mathbf{1}_n-P[v_1|v_m])C[v_1|v_m]&=&(\mathbf{1}_n-P[v_1|v_m])P[v_1|v_m]C[v_1|v_m]P[v_1|v_m]\nonumber\\
&&+(\mathbf{1}_n-P[v_1|v_m])^2\nonumber\\
&=&\mathbf{0}_nC[v_1|v_m]P[v_1|v_m]+\mathbf{1}_n-P[v_1|v_m]\nonumber\\
&=&\mathbf{1}_n-P[v_1|v_m]
\label{P1m-cond-6}
\end{eqnarray}
and also that.
\begin{eqnarray}
C[v_1|v_m](\mathbf{1}_n-P[v_1|v_m])&=&P[v_1|v_m]C[v_1|v_m]P[v_1|v_m](\mathbf{1}_n-P[v_1|v_m])\nonumber\\
&&+(\mathbf{1}_n-P[v_1|v_m])^2\nonumber\\
&=&P[v_1|v_m]C[v_1|v_m]\mathbf{0}_n+\mathbf{1}_n-P[v_1|v_m]\nonumber\\
&=&\mathbf{1}_n-P[v_1|v_m]
\label{P1m-cond-7}
\end{eqnarray}
This completes the proof.
\end{proof}

\begin{lemma}
\label{min-pol-C1m}
Given an orthogonal $m$-system $v_{1},\cdots,v_{m}\in\mathbb{C}^{n\times1}\backslash\{0\}$ with $m\leq n$. The matrix $C[v_{1}|v_{m}]$ defined by \eqref{p-cycle-def} satisfies the equation
\begin{equation}
C[v_{1}|v_{m}]^m=\mathbf{1}_n.
\label{def-eq-C1m}
\end{equation}
Moreover, $p(z)=z^m-1$ is the minimal polynomial of $C[v_1|v_m]$.
\end{lemma}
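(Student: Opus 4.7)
The plan is to exploit the block structure of $C[v_1|v_m]$ provided by Lemma \ref{lema-tool-2}: it acts as a cyclic permutation on the vectors $v_1, \ldots, v_m$, and as the identity on the orthogonal complement of their span. From this, both assertions will follow.

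First I would handle the equation $C[v_1|v_m]^m = \mathbf{1}_n$. By Lemma \ref{lema-tool-2}, iterating the identities $C[v_1|v_m] v_j = v_{j+1}$ for $1 \le j \le m-1$ and $C[v_1|v_m] v_m = v_1$ gives, by a short induction on $k$, that $C[v_1|v_m]^k v_j = v_{((j+k-1) \bmod m) + 1}$, so in particular $C[v_1|v_m]^m v_j = v_j$ for each $j$. This shows $C[v_1|v_m]^m$ agrees with the identity on $\mathrm{ran}(P[v_1|v_m]) = \mathrm{span}\{v_1,\ldots,v_m\}$. On the complement, the identity $C[v_1|v_m](\mathbf{1}_n - P[v_1|v_m]) = \mathbf{1}_n - P[v_1|v_m]$ from Lemma \ref{lema-tool-2} gives by induction $C[v_1|v_m]^m(\mathbf{1}_n - P[v_1|v_m]) = \mathbf{1}_n - P[v_1|v_m]$. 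Any $w \in \mathbb{C}^n$ decomposes as $w = P[v_1|v_m] w + (\mathbf{1}_n - P[v_1|v_m]) w$ by Lemma \ref{First_Projective_lemma}, and $P[v_1|v_m] w \in \mathrm{span}\{v_1,\ldots,v_m\}$. So $C[v_1|v_m]^m w = w$ for every $w$, i.e.\ $C[v_1|v_m]^m = \mathbf{1}_n$, as required.

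For the minimality of $p(z) = z^m - 1$, the previous paragraph shows $p$ annihilates $C[v_1|v_m]$, so the minimal polynomial divides $p$; it remains to show no polynomial of degree strictly less than $m$ annihilates $C[v_1|v_m]$. Suppose $q(z) = \sum_{k=0}^{d} a_k z^k$ with $d \le m-1$ satisfies $q(C[v_1|v_m]) = \mathbf{0}_n$. Applying both sides to $v_1$ and using $C[v_1|v_m]^k v_1 = v_{k+1}$ for $0 \le k \le m-1$ yields
\begin{equation}
\sum_{k=0}^{d} a_k v_{k+1} = \mathbf{0}.
\label{min-pol-lin-indep-eq}
\end{equation}
The orthogonal $m$-system $v_1, \ldots, v_m$ is linearly independent (orthogonality together with $v_j \ne \mathbf{0}$ forces linear independence: pairing \eqref{min-pol-lin-indep-eq} with $v_\ell^\ast$ for $0 \le \ell - 1 \le d$ yields $a_{\ell - 1} v_\ell^\ast v_\ell = 0$, hence $a_{\ell - 1} = 0$). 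Therefore all $a_k$ vanish, so the minimal polynomial of $C[v_1|v_m]$ has degree at least $m$, and must coincide with the monic polynomial $z^m - 1$.

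The only step that needs care is verifying that $C[v_1|v_m]^m$ restricts to the identity on both summands of the orthogonal decomposition $\mathbf{1}_n = P[v_1|v_m] + (\mathbf{1}_n - P[v_1|v_m])$; this is mostly bookkeeping, since Lemma \ref{lema-tool-2} already packages both the cyclic action on $\mathrm{span}\{v_1,\ldots,v_m\}$ and the identity action on its orthogonal complement. I do not anticipate any substantial obstacle beyond tracking indices in the cyclic permutation.
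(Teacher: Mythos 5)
Your argument is correct and follows essentially the paper's route: use Lemma~\ref{lema-tool-2} to see that $C[v_1|v_m]^m$ fixes each $v_j$ and fixes the range of $\mathbf{1}_n - P[v_1|v_m]$, then combine via the orthogonal decomposition of Lemma~\ref{First_Projective_lemma}. Your minimality step is actually tighter than the paper's: the paper only observes that $C[v_1|v_m]^k \neq \mathbf{1}_n$ for $k < m$, which excludes divisors of the form $z^k - 1$ but does not, on its face, rule out other monic divisors of $z^m - 1$ of degree less than $m$; by contrast you apply an arbitrary polynomial of degree at most $m-1$ to $v_1$ and invoke linear independence of $v_1,\ldots,v_m$, which is exactly what the claim requires.
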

\begin{proof}
Given an orthogonal $m$-system $v_{1},\cdots,v_{m}\in\mathbb{C}^{n\times1}\backslash\{0\}$ with $m\leq n$. By \eqref{p-cycle-def}, \eqref{p-cycle-proj-def}, and by iterating on \cref{lema-tool-2}, we will have that.
\begin{eqnarray}
C[v_{1}|v_{m}]^m&=&C[v_{1}|v_{m}]^{m-1}\left(\frac{1}{v_m^\ast v_m}v_1v_m^\ast+\sum_{j=1}^{m-1}\frac{1}{v_j^\ast v_j}v_{j+1}v_{j}^\ast+\mathbf{1}_n-P[v_1|v_m]\right)\nonumber\\
&=&\frac{1}{v_m^\ast v_m}(C[v_{1}|v_{m}]^{m-1}v_1)v_m^\ast+\sum_{j=1}^{m-1}\frac{1}{v_k^\ast v_j}(C[v_{1}|v_{m}]^{m-1}v_{j+1})v_{j}^\ast\nonumber\\
&&+~C[v_{1}|v_{m}]^{m-1}(\mathbf{1}_n-P[v_1|v_m])\nonumber\\
&=&\frac{1}{v_m^\ast v_m}v_{m-1+1}v_m^\ast+\sum_{j=1}^{m-1}\frac{1}{v_j^\ast v_{j}}v_{(j+1+(m-1))~\mathrm{mod}~m}v_{j}^\ast+\mathbf{1}_n-P[v_1|v_m]\nonumber\\
&=&\frac{1}{v_m^\ast v_m}v_{m}v_m^\ast+\sum_{j=1}^{m-1}\frac{1}{v_j^\ast v_{j}}v_{j}v_{j}^\ast+\mathbf{1}_n-P[v_1|v_m]\nonumber\\
&=&\sum_{j=1}^{m}\frac{1}{v_j^\ast v_{j}}v_{j}v_{j}^\ast+\mathbf{1}_n-P[v_1|v_m]\nonumber\\
&=&P[v_1|v_m]+\mathbf{1}_n-P[v_1|v_m]=\mathbf{1}_n.
\label{eq:min_pol_C1m}
\end{eqnarray}
By orthogonality properties we have that the system $v_1,\ldots,v_m$ is linearly independent. This fact combined  with \eqref{eq:min_pol_C1m} implies that $p(z)=z^m-1$ is the minimal polynomial of $C[v_1|v_m]$, since if $C[v_1|v_m]^k=\mathbf{1}_n$ for some $k<m$, we would have that $v_1=v_k$, which contradicts the linear independence of the system.
\end{proof}

\begin{lemma}
\label{unitary-C1m}
Given an orthonormal $m$-system $\{v_1,\ldots,v_m\}\in \mathbb{C}^n\backslash \{\mathbf{0}\}$ with $m\leq n$, we will have that the corresponding matrix $C[v_1|v_m]$ is unitary.
\end{lemma}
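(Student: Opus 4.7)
The plan is to verify unitarity directly by showing $C[v_1|v_m]^\ast C[v_1|v_m] = C[v_1|v_m] C[v_1|v_m]^\ast = \mathbf{1}_n$, using the orthogonal decomposition $\mathbf{1}_n = P[v_1|v_m] + (\mathbf{1}_n - P[v_1|v_m])$ furnished by \cref{First_Projective_lemma}. Since the system is orthonormal we have $v_j^\ast v_j = 1$, so
\begin{equation*}
C := C[v_1|v_m] = v_1 v_m^\ast + \sum_{j=1}^{m-1} v_{j+1} v_j^\ast + \mathbf{1}_n - P,
\end{equation*}
with $P := P[v_1|v_m] = \sum_{j=1}^m v_j v_j^\ast$, and its adjoint is
\begin{equation*}
C^\ast = v_m v_1^\ast + \sum_{j=1}^{m-1} v_j v_{j+1}^\ast + \mathbf{1}_n - P.
\end{equation*}

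First I would handle the complementary subspace. By \cref{lema-tool-2} we have $C(\mathbf{1}_n - P) = \mathbf{1}_n - P$; taking adjoints and using that $\mathbf{1}_n - P$ is selfadjoint yields $(\mathbf{1}_n - P)C^\ast = \mathbf{1}_n - P$, and a symmetric computation on the expression above gives $C^\ast(\mathbf{1}_n - P) = \mathbf{1}_n - P$ as well. Hence both $C$ and $C^\ast$ restrict to the identity on $\mathrm{range}(\mathbf{1}_n - P)$, so $C^\ast C$ and $C C^\ast$ agree with $\mathbf{1}_n$ on that subspace.

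Next, on $\mathrm{range}(P) = \mathrm{span}\{v_1,\ldots,v_m\}$, I would use the orthonormality relations $v_k^\ast v_j = \delta_{k,j}$ to compute $C^\ast v_j$ term by term. A direct calculation gives $C^\ast v_1 = v_m$ and $C^\ast v_{j+1} = v_j$ for $1\leq j\leq m-1$, so that $C^\ast$ realizes the inverse cyclic shift on the orthonormal basis $\{v_1,\ldots,v_m\}$. Combined with the forward cycle $C v_j = v_{j+1}$ ($1\leq j\leq m-1$) and $C v_m = v_1$ from \cref{lema-tool-2}, this yields $C^\ast C v_j = v_j$ and $C C^\ast v_j = v_j$ for every $1\leq j\leq m$, so $C^\ast C$ and $C C^\ast$ also act as the identity on $\mathrm{range}(P)$.

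Combining the two subspaces via the direct sum $\mathbb{C}^n = \mathrm{range}(P) \oplus \mathrm{range}(\mathbf{1}_n - P)$ from \cref{First_Projective_lemma} gives $C^\ast C = C C^\ast = \mathbf{1}_n$. There is no substantive obstacle; the only care needed is index bookkeeping in the adjoint calculation on the cycle (ensuring the shift indices wrap consistently between $C$ and $C^\ast$). A brisk alternative, worth mentioning as a sanity check, is that \cref{min-pol-C1m} already gives $C^m = \mathbf{1}_n$, so once normality $C^\ast C = C C^\ast$ is established the spectrum sits in the $m$-th roots of unity and unitarity is automatic.
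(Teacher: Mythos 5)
Your proof is correct and takes essentially the same approach as the paper: both decompose $\mathbb{C}^n$ via the projection $P[v_1|v_m]$ and its complement using \cref{First_Projective_lemma} and \cref{lema-tool-2}, with the only variation being that you verify $C^\ast C$ and $CC^\ast$ vector-by-vector on the orthonormal basis of $\mathrm{range}(P[v_1|v_m])$, whereas the paper multiplies out the rank-one sum expressions for $C^\ast P[v_1|v_m]$ and $P[v_1|v_m]C$ directly (and obtains $CC^\ast=\mathbf{1}_n$ by taking the adjoint of $C^\ast C=\mathbf{1}_n$ rather than by a separate calculation). Your closing remark that normality plus $C^m=\mathbf{1}_n$ from \cref{min-pol-C1m} would also give unitarity is a sound observation, though it does not shorten the work since normality itself requires comparable effort.
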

\begin{proof}
Given an orthonormal $m$-system $\{v_1,\ldots,v_m\}\in \mathbb{C}^n\backslash \{\mathbf{0}\}$, since $v_j^\ast v_j=1$ for each $1\leq j\leq m$, by \eqref{p-cycle-def} we will have that the matrix $C[v_1|v_m]$ satisfies the equation,
\begin{equation}
C[v_{1}|v_{m}]=v_{1}v_{m}^{\ast}+\sum_{j=1}^{m-1}v_{j+1}v_{j}^{\ast}+\mathbf{1}_n-P[v_1|v_m]
\label{C-1m-orthn-system}
\end{equation}
and also that the matrix $P[v_1,v_m]$ satisfies the equation.
\begin{equation}
P[v_{1}|v_{m}]=\sum_{j=1}^{m}v_{j}v_{j}^{\ast}
\label{P-1m-orthn-system}
\end{equation}
By \cref{lema-tool-2} we will have that,
\begin{eqnarray}
C[v_{1}|v_{m}]^\ast(\mathbf{1}_n-P[v_1|v_m])&=&((\mathbf{1}_n-P[v_1|v_m])C[v_{1}|v_{m}])^\ast\nonumber\\
&=&\mathbf{1}_n-P[v_1|v_m]
\label{C-1m-complement-1}
\end{eqnarray}
and also that.
\begin{eqnarray}
C[v_{1}|v_{m}]^\ast P[v_1|v_m]&=&(P[v_1|v_m]C[v_{1}|v_{m}])^\ast\nonumber\\
&=&(P[v_1|v_m]C[v_{1}|v_{m}]P[v_1|v_m])^\ast\nonumber\\
&=&v_{m}v_{1}^{\ast}+\sum_{j=1}^{m-1}v_{j}v_{j+1}^{\ast}
\label{C-1m-complement-2}
\end{eqnarray}
Since 
\begin{eqnarray*}
C[v_1|v_m]^\ast&=&C[v_1|v_m]^\ast(P[v_1|v_m]+\mathbf{1}_n-P[v_1|v_m])\\
&=&C[v_1|v_m]^\ast P[v_1|v_m]+\mathbf{1}_n-P[v_1|v_m]
\end{eqnarray*}
and 
\begin{eqnarray*}
C[v_1|v_m]&=&(P[v_1|v_m]+\mathbf{1}_n-P[v_1|v_m])C[v_1|v_m]\\
&=&P[v_1|v_m]C[v_1|v_m]+\mathbf{1}_n-P[v_1|v_m],
\end{eqnarray*}
by \eqref{C-1m-orthn-system} and \eqref{P-1m-orthn-system} we will have that.
\begin{eqnarray}
C[v_1|v_m]^\ast C[v_1|v_m]&=&C[v_1|v_m]^\ast P[v_1|v_m]P[v_1|v_m]C[v_1|v_m]+(\mathbf{1}_n-P[v_1|v_m])^2\nonumber\\
&=&\left(v_{m}v_{1}^{\ast}+\sum_{j=1}^{m-1}v_{j}v_{j+1}^{\ast}\right)\left(v_1v_m^\ast+\sum_{j=1}^{m-1}v_{j+1}v_j^\ast\right)\nonumber\\
&&+\mathbf{1}_n-P[v_1|v_m]\nonumber\\
&=&P[v_1|v_m]+\mathbf{1}_n-P[v_1|v_m]=\mathbf{1}_n
\end{eqnarray}
This implies that.
\begin{eqnarray}
C[v_1|v_m] C[v_1|v_m]^\ast&=(C[v_1|v_m]^\ast C[v_1|v_m])^\ast=(\mathbf{1}_n)^\ast=\mathbf{1}_n
\end{eqnarray}
This completes the proof.
\end{proof}

\begin{lemma}
\label{orthonormalization-lemma}
Given $m$ vectors $v_1,\ldots,v_m\in \mathbb{C}^{n}\backslash\{\mathbf{0}\}$ such that $2m\leq n$, there is a orthonormal $m$-system $\hat{v}_1,\ldots,\hat{v}_m\in \mathbb{C}^n$, two scalars $\rho,\kappa=\mathbb{C}$, two projections $K$ , $T$ and a unitary $U[v_1|v_m]$ in $\mathbb{C}^{n\times n}$  such that:
\begin{equation}
\left\{
\begin{array}{l}
Tv_1=\rho\hat{v}_1,\\
U[v_1|v_m]\hat{v}_j=\hat{v}_{j+1},\\
U[v_1|v_m]\hat{v}_m=\hat{v}_{1}\\
K\kappa\hat{v}_j=v_j
\end{array}
\right.
\label{cyclic-constraints}
\end{equation}
for each $1\leq j\leq m-1$.
\end{lemma}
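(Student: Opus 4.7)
The plan is to construct the orthonormal system $\hat{v}_1,\ldots,\hat{v}_m$ by embedding a scaled copy of each $v_j$ into $\mathbb{C}^n$ and then augmenting it with a correction vector living in the orthogonal complement of $\mathrm{span}\{v_1,\ldots,v_m\}$. Concretely, let $V=[v_1 \mid \cdots \mid v_m]\in \mathbb{C}^{n\times m}$ and pick any scalar $\kappa\in\mathbb{C}$ with $|\kappa|\geq \|V\|$ (the spectral norm); this guarantees that the Gram matrix $G=\mathbf{1}_m-\frac{1}{|\kappa|^2}V^\ast V\in \mathbb{C}^{m\times m}$ is positive semi-definite.

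Next I would produce correction vectors $e_1,\ldots,e_m$ in the orthogonal complement $W=\{x\in\mathbb{C}^n\mid V^\ast x=\mathbf{0}\}$ satisfying $e_i^\ast e_j=G_{ij}$. Since $\dim W\geq n-m\geq m$ by the hypothesis $2m\leq n$, we may fix any orthonormal $m$-tuple $w_1,\ldots,w_m\in W$ and, using a Cholesky-type factorization $G=L^\ast L$ with $L\in\mathbb{C}^{m\times m}$, set $e_j=\sum_{k=1}^m L_{kj}w_k$. By construction the vectors $\hat{v}_j:=\frac{1}{\kappa}v_j+e_j$ satisfy
\begin{equation*}
\hat{v}_i^\ast \hat{v}_j=\frac{v_i^\ast v_j}{|\kappa|^2}+e_i^\ast e_j=\frac{v_i^\ast v_j}{|\kappa|^2}+\delta_{ij}-\frac{v_i^\ast v_j}{|\kappa|^2}=\delta_{ij},
\end{equation*}
so $\{\hat{v}_j\}$ is an orthonormal $m$-system.

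Having produced the $\hat{v}_j$, the remaining data are straightforward. Let $K$ be the orthogonal projection onto $\mathrm{span}\{v_1,\ldots,v_m\}$; since each $e_j\in W=\ker K$, we get $K\hat{v}_j=\frac{1}{\kappa}v_j$, hence $K\kappa\hat{v}_j=v_j$. Let $T=\hat{v}_1\hat{v}_1^\ast$, a rank-one orthogonal projection, and set $\rho=\hat{v}_1^\ast v_1=\|v_1\|^2/\bar{\kappa}$ (using $e_1\in V^\perp$, so $e_1^\ast v_1=0$); then $Tv_1=\rho\hat{v}_1$. Finally, since $\{\hat{v}_j\}$ is an orthonormal $m$-system, Lemma \ref{unitary-C1m} gives that $C[\hat{v}_1\mid \hat{v}_m]$ is unitary, and Lemma \ref{lema-tool-2} shows it cyclically permutes the $\hat{v}_j$'s; we take $U[v_1\mid v_m]:=C[\hat{v}_1\mid \hat{v}_m]$.

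The main obstacle, and the only place the dimension hypothesis $2m\leq n$ is genuinely used, is the middle step: arranging correction vectors $e_j$ with prescribed Gram matrix $G$ inside the $(n-\dim V)$-dimensional space $V^\perp$. Positivity of $G$ follows from the choice $|\kappa|\geq \|V\|$, and the existence of $m$ orthonormal vectors in $V^\perp$ to host the Cholesky factors requires precisely $n-m\geq m$. Everything else—orthonormality of $\{\hat{v}_j\}$, the projection identities for $K$ and $T$, and the unitary cyclic action of $U[v_1\mid v_m]$—then reduces to a direct computation or a citation of the preceding lemmas.
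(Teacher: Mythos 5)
Your proposal is correct, and it captures the same underlying idea as the paper's proof — lifting the scaled vectors $v_j/\kappa$ to an orthonormal frame by adding correction vectors living in the orthogonal complement, so that a well-chosen projection $K$ strips the corrections off again — but it arrives there by a genuinely different decomposition. The paper fixes $\kappa=s_1$ (the largest singular value), takes the thin SVD $\mathbb{H}[v_1|v_m]=VSW$, and explicitly builds the correction block as $\mathbb{SH}=U\,\mathrm{diag}(t_1,\ldots,t_m)W$ with $t_j=\sqrt{1-(s_j/s_1)^2}$, where $U_1,\ldots,U_m$ is an orthonormal system in $(\mathrm{span}\{V_j\})^\perp$; orthonormality of the columns of $\hat{\mathbb{V}}=\mathbb{CH}+\mathbb{SH}$ then follows from the Pythagorean identity $(s_j/s_1)^2+t_j^2=1$. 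You instead allow \emph{any} $\kappa$ with $|\kappa|\geq\|V\|$, observe that the Gram deficit $G=\mathbf{1}_m-\tfrac{1}{|\kappa|^2}V^\ast V$ is positive semi-definite, and realize the corrections via any factorization $G=L^\ast L$ spread over an orthonormal $m$-tuple in $V^\perp$. These are the same dilation up to a choice of Cholesky factor: indeed the paper's $L$ is exactly $\mathrm{diag}(t_j)W$, since $\mathbf{1}_m-\tfrac{1}{s_1^2}V^\ast V=W^\ast\mathrm{diag}(t_j^2)W$. What your version buys is generality in $\kappa$ and a proof that does not presuppose the SVD, only PSD factorization; what the paper's version buys is a canonical, numerically explicit construction directly aligned with the algorithm in \cref{alg:main_alg}. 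Both proofs use $2m\leq n$ in exactly the same place — to fit $m$ orthonormal vectors in the complement — and both finish identically, setting $T=\hat{v}_1\hat{v}_1^\ast$ and $U[v_1|v_m]=C[\hat{v}_1|\hat{v}_m]$ via \cref{unitary-C1m} and \cref{lema-tool-2}.
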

\begin{proof}
Let us consider the matrix $\mathbb{H}[v_1|v_m]\in \mathbb{C}^{n\times m}$ defined by the expression.
\begin{equation}
\mathbb{H}[v_1|v_m]=
\begin{bmatrix}
| & | & & |\\
v_1 & v_2 & \cdots & v_m\\
| & | &  & |
\end{bmatrix}
\label{history_matrix}
\end{equation}
By the singular value decomposition theorem, we have that $\mathbb{H}[v_1|v_m]$ has a representation of the form.
\begin{equation}
\mathbb{H}[v_1|v_m]=
\begin{bmatrix}
| & | & & |\\
V_1 & V_2 & \cdots & V_m\\
| & | &  & |
\end{bmatrix}
\begin{bmatrix}
s_1 & 0& \cdots& 0\\
0 & s_2 & &\vdots \\
\vdots &  & \ddots  & 0\\
0 & \cdots&0&s_m
\end{bmatrix}
\begin{bmatrix}
| & | & & |\\
W_1 & W_2 & \cdots & W_m\\
| & | &  & |
\end{bmatrix}
\label{history_matrix_svd-1}
\end{equation}
Where $V_1,\ldots,V_m$ and $W_1,\ldots,W_m$ are orthonormal $m$-systems in $\mathbb{C}^n$ and $\mathbb{C}^m$ respectively, and with $s_1\geq s_2\geq \cdots s_m\geq 0$. Since $2m\leq n$, by the Gram-Schmidt orthonormalization theorem, we will have that there is an orthonormal $m$-system $U_1,\ldots,U_m\in (\mathrm{span}_{\mathbb{C}}\{V_1,\ldots,V_m\})^\perp$. Since $\{v_1,\ldots,v_m\}\in \mathbb{C}^n\backslash \{\mathbf{0}\}$ we will have that for each $1\leq j\leq m$, $s_1\geq  s_j>0$. Let us set $t_j=\sqrt{1-(s_j/s_1)^2}$, for $1\leq j\leq m$. We will have that $0\leq t_j\leq 1$ and $(s_j/s_1)^2+t_j^2=1$ for each $1\leq j\leq m$, since $s_j/s_1\leq 1$ for every $1\leq j\leq m$. Let us define the matrix $\mathbb{CH}[v_1|v_m]$ by the expression.
\begin{equation}
\mathbb{CH}[v_1|v_m]=
\begin{bmatrix}
| & | & & |\\
V_1 & V_2 & \cdots & V_m\\
| & | &  & |
\end{bmatrix}
\begin{bmatrix}
1 & 0& \cdots& 0\\
0 & s_2/s_1 & &\vdots \\
\vdots &  & \ddots  & 0\\
0 & \cdots&0&s_m/s_1
\end{bmatrix}
\begin{bmatrix}
| & | & & |\\
W_1 & W_2 & \cdots & W_m\\
| & | &  & |
\end{bmatrix}
\label{history_matrix_svd-2}
\end{equation}
and 
\begin{equation}
\mathbb{SH}[v_1|v_m]=
\begin{bmatrix}
| & | & & |\\
U_1 & U_2 & \cdots & U_m\\
| & | &  & |
\end{bmatrix}
\begin{bmatrix}
t_1 & 0& \cdots& 0\\
0 & t_2 & &\vdots \\
\vdots &  & \ddots  & 0\\
0 & \cdots&0&t_m
\end{bmatrix}
\begin{bmatrix}
| & | & & |\\
W_1 & W_2 & \cdots & W_m\\
| & | &  & |
\end{bmatrix},
\label{history_matrix_svd-3}
\end{equation}
Let us define the matrix $\hat{\mathbb{V}}=[\hat{v}_1 ~ \cdots ~ \hat{v}_m]\in \mathbb{C}^{n\times n}$ by the expression.
\begin{equation}
\hat{\mathbb{V}}=\mathbb{CH}[v_1|v_m]+\mathbb{SH}[v_1|v_m]
\label{history_matrix_svd-4}
\end{equation}
Since $(s_j/s_1)^2+t_j^2=1$ for each $1\leq j\leq m$, by \eqref{history_matrix_svd-2} and \eqref{history_matrix_svd-3} and by orthogonality of the $2m$-system $V_1,\ldots,V_m,U_1,\ldots,U_m$, we will have that $\hat{\mathbb{V}}^\ast\hat{\mathbb{V}}=\mathbf{1}_m$. This implies that $\hat{v}_1,\ldots,\hat{v}_m$ is an orthonormal $m$-system. By \cref{unitary-C1m} we have that $C[\hat{v}_1|\hat{v}_m]$ is a unitary matrix that satisfies the constraints $C[\hat{v}_1|\hat{v}_m]\hat{v}_j=\hat{v}_{j+1}$, $1\leq j\leq m-1$, and $C[\hat{v}_1|\hat{v}_m]\hat{v}_m=\hat{v}_{1}$.

Let us set.
\begin{eqnarray}
\kappa&=&s_1\nonumber\\
\rho&=&\frac{1}{s_1}(v_1^\ast v_1)\nonumber\\
K&=&\begin{bmatrix}
| & | & & |\\
V_1 & V_2 & \cdots & V_m\\
| & | &  & |
\end{bmatrix}
\begin{bmatrix}
| & | & & |\nonumber\\
V_1 & V_2 & \cdots & V_m\\
| & | &  & |
\end{bmatrix}^\ast\nonumber\\
T&=&\hat{v}_1\hat{v}_1^\ast\nonumber\\
U[v_1|v_m]&=&C[\hat{v}_1|\hat{v}_m]
\label{triple_def}
\end{eqnarray}

Since $\hat{\mathbb{V}}^\ast \hat{\mathbb{V}}=\mathbf{1}_n$ we will have that $K^2=\hat{\mathbb{V}}^\ast \hat{\mathbb{V}}\hat{\mathbb{V}}^\ast \hat{\mathbb{V}}=\hat{\mathbb{V}}^\ast \hat{\mathbb{V}}$ and $K^\ast=(\hat{\mathbb{V}}^\ast \hat{\mathbb{V}})^\ast=\hat{\mathbb{V}}^\ast \hat{\mathbb{V}}=K$.

Since $U_1,\ldots,U_m\in (\mathrm{span}_{\mathbb{C}}\{V_1,\ldots,V_m\})^\perp$, we will have that $K\hat{\mathbb{V}}=\mathbb{CH}[v_1|v_m]$, by \eqref{history_matrix_svd-2} this implies that.
\[
K\kappa\hat{v}_j=\kappa K\hat{\mathbb{V}}\hat{e}_{j,m}=s_1\mathbb{CH}[v_1|v_m]\hat{e}_{j,m}=\mathbb{H}[v_1|v_m]\hat{e}_{j,m}=v_j
\]

We will first show that $\hat{v}_1^\ast v_1\neq 0$, in fact, since $\mathbb{H}[v_1|v_m]=s_1\mathbb{CH}[v_1|v_m]$, and $\mathbb{SH}[v_1|v_m]^\ast\mathbb{CH}[v_1|v_m]=\mathbf{0}_n$ by orthogonality of $V_1,\ldots,V_m,U_1,\ldots,U_m$, we will have that.

\begin{eqnarray}
\hat{v}_1^\ast v_1&=&(\hat{\mathbb{V}}\hat{e}_{1,m})^\ast \mathbb{H}[v_1|v_m]\hat{e}_{1,m}\nonumber\\
&=&\hat{e}_{1,m}^\ast \hat{\mathbb{V}}^\ast \mathbb{H}[v_1|v_m]\hat{e}_{1,m}\nonumber\\
&=&\hat{e}_{1,m}^\ast \frac{1}{s_1}\mathbb{H}[v_1|v_m]^\ast \mathbb{H}[v_1|v_m]\hat{e}_{1,m}\nonumber\\
&=&\frac{1}{s_1}\hat{e}_{1,m}^\ast \mathbb{H}[v_1|v_m]^\ast \mathbb{H}[v_1|v_m]\hat{e}_{1,m}\nonumber\\
&=&\frac{1}{s_1}(\mathbb{H}[v_1|v_m]\hat{e}_{1,m})^\ast \mathbb{H}[v_1|v_m]\hat{e}_{1,m}\nonumber\\
&=&\frac{1}{s_1} v_1^\ast v_1>0
\label{non-zero-product}
\end{eqnarray}

We will also have that $T^2=\hat{v}_1\hat{v}_1^\ast\hat{v}_1\hat{v}_1^\ast=\hat{v}_1\hat{v}_1^\ast=T$ and $T^\ast =(\hat{v}_1\hat{v}_1^\ast)^\ast=\hat{v}_1\hat{v}_1^\ast=T$.

Since $\hat{v}_1^\ast v_1\neq 0$, by \eqref{non-zero-product} we have that $Tv_1=\hat{v}_1(\hat{v}_1^\ast v_1)=\frac{1}{s_1} (v_1^\ast v_1)\hat{v}_1=\rho \hat{v}_1$. This completes the proof.
\end{proof}

\begin{lemma}
\label{projective-morphism}
Given an orthonormal $m$-system $\hat{v}_1,\ldots,\hat{v}_m\in \mathbb{C}^n\backslash \{\mathbf{0}\}$ with $m\leq n$. There is $\hat{V}\in \mathbb{C}^{n\times m}$ determined by $\hat{v}_1,\ldots,\hat{v}_m$, such that the map $\Pi_m=\mathrm{Ad}[\hat{V}^\ast]\in CP(n,m)$ from $Z(P[\hat{v}_1|\hat{v}_m])$ onto $\mathbb{C}^{m\times m}$ preserves products in $Z(P[\hat{v}_1|\hat{v}_m])$, with $P[\hat{v}_1|\hat{v}_m]$ determined by \eqref{p-cycle-proj-def}. Moreover, we will have that $\Pi_m(P[\hat{v}_1|\hat{v}_m])=\mathbf{1}_m$, and $\Pi_m(X)=\Pi_m(P[\hat{v}_1|\hat{v}_m]X)$ for any $X\in \mathbb{C}^{n\times n}$.
\end{lemma}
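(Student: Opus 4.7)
The plan is to take $\hat{V}=[\hat{v}_1~\hat{v}_2~\cdots~\hat{v}_m]\in\mathbb{C}^{n\times m}$, the matrix whose columns are the given orthonormal vectors, and to derive everything from the two ``dual'' identities $\hat{V}^\ast\hat{V}=\mathbf{1}_m$ (orthonormality of the system) and $\hat{V}\hat{V}^\ast=\sum_{j=1}^m \hat{v}_j\hat{v}_j^\ast=P[\hat{v}_1|\hat{v}_m]$ (which is exactly \eqref{p-cycle-proj-def} in the unit-norm case). The map $\Pi_m=\mathrm{Ad}[\hat{V}^\ast]$, sending $X\mapsto \hat{V}^\ast X\hat{V}$, is automatically completely positive, as it is already written as a Choi sum with a single term $W=\hat{V}^\ast\in\mathbb{C}^{m\times n}$.

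First I would verify $\Pi_m(P[\hat{v}_1|\hat{v}_m])=\mathbf{1}_m$. By \cref{First_Projective_lemma}, every column of $\hat{V}$ is fixed by $P[\hat{v}_1|\hat{v}_m]$, so $P[\hat{v}_1|\hat{v}_m]\hat{V}=\hat{V}$, and therefore $\Pi_m(P[\hat{v}_1|\hat{v}_m])=\hat{V}^\ast P[\hat{v}_1|\hat{v}_m]\hat{V}=\hat{V}^\ast\hat{V}=\mathbf{1}_m$. Taking adjoints in $P[\hat{v}_1|\hat{v}_m]\hat{V}=\hat{V}$ and using self-adjointness of $P[\hat{v}_1|\hat{v}_m]$ (again \cref{First_Projective_lemma}) gives $\hat{V}^\ast P[\hat{v}_1|\hat{v}_m]=\hat{V}^\ast$, and from this the absorption identity $\Pi_m(X)=\hat{V}^\ast X\hat{V}=\hat{V}^\ast P[\hat{v}_1|\hat{v}_m]X\hat{V}=\Pi_m(P[\hat{v}_1|\hat{v}_m]X)$ holds for every $X\in\mathbb{C}^{n\times n}$.

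The main step is product preservation on the commutant $Z(P[\hat{v}_1|\hat{v}_m])$. For $X,Y$ commuting with $P[\hat{v}_1|\hat{v}_m]$, I would insert the dual identity in the middle of the product:
\[
\Pi_m(X)\Pi_m(Y)=\hat{V}^\ast X\hat{V}\hat{V}^\ast Y\hat{V}=\hat{V}^\ast X\,P[\hat{v}_1|\hat{v}_m]\,Y\hat{V}=\hat{V}^\ast P[\hat{v}_1|\hat{v}_m]\,XY\hat{V}=\hat{V}^\ast XY\hat{V}=\Pi_m(XY),
\]
using $\hat{V}\hat{V}^\ast=P[\hat{v}_1|\hat{v}_m]$ first, the hypothesis $XP[\hat{v}_1|\hat{v}_m]=P[\hat{v}_1|\hat{v}_m]X$ next, and $\hat{V}^\ast P[\hat{v}_1|\hat{v}_m]=\hat{V}^\ast$ last. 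Surjectivity of $\Pi_m$ onto $\mathbb{C}^{m\times m}$ is then immediate: for any $M\in\mathbb{C}^{m\times m}$, the lift $\hat{V}M\hat{V}^\ast\in\mathbb{C}^{n\times n}$ commutes with $P[\hat{v}_1|\hat{v}_m]$ (since $P[\hat{v}_1|\hat{v}_m]\hat{V}=\hat{V}$ on the left and $\hat{V}^\ast P[\hat{v}_1|\hat{v}_m]=\hat{V}^\ast$ on the right both pin $P[\hat{v}_1|\hat{v}_m]$ to $\hat{V}M\hat{V}^\ast$) and satisfies $\Pi_m(\hat{V}M\hat{V}^\ast)=\hat{V}^\ast\hat{V}M\hat{V}^\ast\hat{V}=M$.

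The only substantive observation in this argument is the dual pair $\hat{V}^\ast\hat{V}=\mathbf{1}_m$, $\hat{V}\hat{V}^\ast=P[\hat{v}_1|\hat{v}_m]$; once it is recognized, each clause of the lemma reduces to a one- or two-line manipulation against \cref{First_Projective_lemma}, so I do not anticipate any genuine obstacle.
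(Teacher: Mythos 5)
Your proposal is correct and follows essentially the same route as the paper: same choice of $\hat{V}$, same dual identities $\hat{V}^\ast\hat{V}=\mathbf{1}_m$ and $\hat{V}\hat{V}^\ast=P[\hat{v}_1|\hat{v}_m]$, and the same trick of inserting $\hat{V}\hat{V}^\ast=P$ in the middle and commuting it through to get product preservation. The only cosmetic difference is at the surjectivity step, where you exhibit $\hat{V}M\hat{V}^\ast$ as an explicit preimage of $M$ lying in the commutant, while the paper spans $\mathbb{C}^{m\times m}$ by checking $\Pi_m(\hat{v}_i\hat{v}_k^\ast)=\hat{e}_{i,m}\hat{e}_{k,m}^\ast$; these are the same computation since your lift specializes to $\hat{v}_i\hat{v}_k^\ast$ on matrix units, and your phrasing is if anything a touch cleaner because it makes membership in $Z(P[\hat{v}_1|\hat{v}_m])$ explicit.
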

\begin{proof}
Let us set.
\begin{equation}
\hat{V}=\sum_{j=1}^m \hat{v}_j\hat{e}_{j,m}^\ast=\begin{bmatrix}
| & | & & |\\
\hat{v}_1 & \hat{v}_2 & \cdots & \hat{v}_m\\
| & | & & |
\end{bmatrix}
\label{Pim-matrix-def}
\end{equation}
Since $\hat{v}_1,\ldots,\hat{v}_m\in \mathbb{C}^n$ is an orthonormal $m$-system, we will have that $\hat{V}^\ast \hat{V}=\mathbf{1}_m$, by \eqref{p-cycle-proj-def} we will have that.
\begin{equation}
P[\hat{v}_1|\hat{v}_m]=\sum_{j=1}^{m}\hat{v}_j\hat{v}_j^\ast=\hat{V}\hat{V}^\ast
\label{alternative-central-projection-def}
\end{equation}
Since $\hat{V}^\ast \hat{V}=\mathbf{1}_m$,  by \eqref{alternative-central-projection-def} we will have that,
\begin{equation}
\Pi_m(P[\hat{v}_1|\hat{v}_m])=\hat{V}^\ast \hat{V} \hat{V}^\ast \hat{V}=\mathbf{1}_n^2=\mathbf{1}_n
\label{unit-preserving-identity}
\end{equation}
we will also have that for any $X\in \mathbb{C}^{n\times n}$. 
\begin{eqnarray}
\Pi_m(XY)&=&\hat{V}^\ast \hat{V}\Pi_m(X)\nonumber\\
&=&\hat{V}^\ast \hat{V} \hat{V}^\ast X\hat{V}\nonumber\\
&=&\hat{V}^\ast P[\hat{v}_1|\hat{v}_m] X\hat{V}\nonumber\\
&&\Pi_m(P[\hat{v}_1|\hat{v}_m]X)
\label{first-restriction-identity-Pim}
\end{eqnarray}
By \eqref{alternative-central-projection-def} and \eqref{first-restriction-identity-Pim} we will have that for any two $X,Y\in Z(P[\hat{v}_1|\hat{v}_m])$. 
\begin{eqnarray}
\Pi_m(XY)&=&\Pi_m(P[\hat{v}_1|\hat{v}_m]XY)\nonumber\\
&=&\Pi_m(X P[\hat{v}_1|\hat{v}_m] Y)\nonumber\\
&=&\hat{V}^\ast X P[\hat{v}_1|\hat{v}_m] Y\hat{V}\nonumber\\
&=&\hat{V}^\ast X \hat{V} \hat{V}^\ast Y\hat{V}\nonumber\\
&=&\Pi_m(X)\Pi_m(Y)
\label{prod-preserving-ident-Pim}
\end{eqnarray}
By \eqref{Pim-matrix-def} and by orthonormality of $\hat{v}_1,\ldots,\hat{v}_m$ we will also have that for each $1\leq i,k\leq m$.
\begin{eqnarray}
\Pi_m(\hat{v}_i\hat{v}_k^\ast)&=&\hat{V}^\ast \hat{v}_j\hat{v}_k^\ast\hat{V}\nonumber\\
&=&(\hat{V}^\ast \hat{v}_i) (\hat{V}^\ast \hat{v}_k)^\ast\nonumber\\
&=&\left(\sum_{j=1}^m \hat{e}_{j,m} \hat{v}_j^\ast \hat{v}_i\right)\left(\sum_{j=1}^m \hat{e}_{j,m} \hat{v}_j^\ast \hat{v}_k\right)^\ast\nonumber\\
&=&\left(\sum_{j=1}^m \hat{e}_{j,m} \delta_{j,i}\right)\left(\sum_{j=1}^m \hat{e}_{j,m} \delta_{j,k}\right)^\ast\nonumber\\
&=& \hat{e}_{i,m} \hat{e}_{k,m}^\ast
\label{matrix-units-generating-ident}
\end{eqnarray}
By \eqref{matrix-units-generating-ident} we have that $\{\hat{e}_{i,m} \hat{e}_{k,m}^\ast\}_{i,k=1}^m\subset \Pi_m(Z(P[\hat{v}_1|\hat{v}_m]))$, since $\Pi_m=\mathrm{Ad}[\hat{V}^\ast]\in CP(m,n)$ and $\mathbb{C}^{m\times m}=\mathrm{span}_{\mathbb{C}}\{\hat{e}_{i,m} \hat{e}_{k,m}^\ast\}_{i,k=1}^m$, we have that $\Pi_m$ is surjective. This completes proof.
\end{proof}

\begin{definition}
Given $v_1,\ldots,v_m\in \mathbb{C}^{n}\backslash\{\mathbf{0}\}$ with $n\geq 2m$, the matrix $U[v_1,v_m]$ whose existence is proved in \cref{orthonormalization-lemma} will be called a circular shift factor (CSF) for $v_1,\ldots,v_m$.
\end{definition}

\begin{lemma}
\label{local-pim-homomorphism}
Given $v_1,\ldots,v_m\in \mathbb{C}^n\backslash \{\mathbf{0}\}$ with $n\geq 2m$, there is an algebra homomorphism $\pi_m$ from $\mathbb{C}[U[v_1|v_m]]$ onto $\mathrm{Circ}(m)$.
\end{lemma}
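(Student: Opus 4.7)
The plan is to realize $\pi_m$ as the restriction to $\mathbb{C}[U[v_1|v_m]]$ of the completely positive map $\Pi_m=\mathrm{Ad}[\hat{V}^\ast]$ built in \cref{projective-morphism}, where $\hat{V}=[\hat{v}_1~\cdots~\hat{v}_m]$ and $\hat{v}_1,\ldots,\hat{v}_m$ is the orthonormal $m$-system produced by \cref{orthonormalization-lemma}. By construction $U[v_1|v_m]=C[\hat{v}_1|\hat{v}_m]$, so the key observation is that $U[v_1|v_m]$ lies in the commutant $Z(P[\hat{v}_1|\hat{v}_m])$: this is exactly the identity $C[\hat{v}_1|\hat{v}_m]P[\hat{v}_1|\hat{v}_m]=P[\hat{v}_1|\hat{v}_m]C[\hat{v}_1|\hat{v}_m]$ furnished by \cref{lema-tool-2}. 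Since $Z(P[\hat{v}_1|\hat{v}_m])$ is a unital subalgebra of $\mathbb{C}^{n\times n}$ containing the identity, it contains every polynomial in $U[v_1|v_m]$, hence $\mathbb{C}[U[v_1|v_m]]\subseteq Z(P[\hat{v}_1|\hat{v}_m])$.

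Next I would invoke the product-preserving property of $\Pi_m$ on $Z(P[\hat{v}_1|\hat{v}_m])$ together with $\Pi_m(\mathbf{1}_n)=\Pi_m(P[\hat{v}_1|\hat{v}_m])+\Pi_m(\mathbf{1}_n-P[\hat{v}_1|\hat{v}_m])=\mathbf{1}_m$ (using $\Pi_m(X)=\Pi_m(P[\hat{v}_1|\hat{v}_m]X)$ from \cref{projective-morphism} and $P[\hat{v}_1|\hat{v}_m](\mathbf{1}_n-P[\hat{v}_1|\hat{v}_m])=\mathbf{0}_n$). This shows that $\pi_m:=\Pi_m|_{\mathbb{C}[U[v_1|v_m]]}$ is a unital algebra homomorphism into $\mathbb{C}^{m\times m}$.

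To pin down the image as $\mathrm{Circ}(m)$, I would compute $\pi_m(U[v_1|v_m])$ explicitly. Using $\Pi_m(X)=\Pi_m(P[\hat{v}_1|\hat{v}_m]X)$ and the third bullet of \cref{lema-tool-2},
\begin{equation*}
\pi_m(U[v_1|v_m])=\Pi_m\bigl(P[\hat{v}_1|\hat{v}_m]C[\hat{v}_1|\hat{v}_m]P[\hat{v}_1|\hat{v}_m]\bigr)=\Pi_m\Bigl(\hat{v}_1\hat{v}_m^\ast+\sum_{j=1}^{m-1}\hat{v}_{j+1}\hat{v}_j^\ast\Bigr),
\end{equation*}
where orthonormality has absorbed the normalizing factors $1/\hat{v}_j^\ast\hat{v}_j$. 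Applying the matrix-unit identity $\Pi_m(\hat{v}_i\hat{v}_k^\ast)=\hat{e}_{i,m}\hat{e}_{k,m}^\ast$ from \cref{projective-morphism} yields
\begin{equation*}
\pi_m(U[v_1|v_m])=\hat{e}_{1,m}\hat{e}_{m,m}^\ast+\sum_{j=1}^{m-1}\hat{e}_{j+1,m}\hat{e}_{j,m}^\ast=C_m.
\end{equation*}
By multiplicativity, $\pi_m(U[v_1|v_m]^k)=C_m^k$ for all $k\geq 0$, so $\pi_m$ sends the spanning set $\{\mathbf{1}_n,U[v_1|v_m],\ldots,U[v_1|v_m]^{m-1}\}$ of $\mathbb{C}[U[v_1|v_m]]$ onto the spanning set $\{\mathbf{1}_m,C_m,\ldots,C_m^{m-1}\}$ of $\mathrm{Circ}(m)=\mathbb{C}[C_m]$, establishing surjectivity onto $\mathrm{Circ}(m)$.

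The only subtlety, and the step I would double-check carefully, is well-definedness: we need $\pi_m$, defined a priori as the restriction of $\Pi_m$, to agree with the naive polynomial map $p(U[v_1|v_m])\mapsto p(C_m)$. This is automatic once we know $\pi_m$ is a unital algebra homomorphism sending $U[v_1|v_m]$ to $C_m$, but it also coheres with \cref{min-pol-C1m}: both $U[v_1|v_m]$ and $C_m$ have minimal polynomial $z^m-1$, so the polynomial relations satisfied by $U[v_1|v_m]$ are preserved. This is the only potential obstruction and it dissolves immediately, so no deep obstacle arises.
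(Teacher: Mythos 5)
Your proposal is correct and follows essentially the same route as the paper: restrict $\Pi_m=\mathrm{Ad}[\hat{V}^\ast]$ to $\mathbb{C}[U[v_1|v_m]]\subseteq Z(P[\hat{v}_1|\hat{v}_m])$, use the product-preserving property from \cref{projective-morphism} and the matrix-unit identity to show $\pi_m(U[v_1|v_m])=C_m$, and conclude surjectivity from multiplicativity. The closing remark on well-definedness is sound but not needed once $\pi_m$ is established as a unital algebra homomorphism.
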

\begin{proof}
By \cref{orthonormalization-lemma} we have that there is an orthornormal $m$-system $\hat{v}_1,\ldots,\hat{v}_m$ $\in$ $\mathbb{C}^{n}$ together with a CSF $U[v_1|v_m]=C[\hat{v}_1|\hat{v}_m]$. By \cref{orthonormalization-lemma} we have that 
$C[\hat{v}_1|\hat{v}_m]\in Z(P[\hat{v}_1|\hat{v}_m])$ with $P[\hat{v}_1|\hat{v}_m]$ determined by \eqref{p-cycle-proj-def}, this in turn implies that $\mathbb{C}[U[v_1|v_m]]$ $=$ $\mathbb{C}[C[\hat{v}_1|\hat{v}_m]]\subset Z(P[\hat{v}_1|\hat{v}_m])$. 

By \cref{projective-morphism} there is 
$\hat{V}\in \mathbb{C}^{n\times m}$ such that $\Pi_m=\mathrm{Ad}[\hat{V}^\ast]$ is $CP$ map from $\mathbb{C}^{n\times n}$ onto $\mathbb{C}^{m\times m}$ that preserves products in $Z(P[\hat{v}_1|\hat{v}_m])$.
Let us set $\pi_m=\Pi_m|_{\mathbb{C}[U[v_1|v_m]]}$. It is clear that $\pi_m\in CP(n,m)$. 

By \eqref{first-restriction-identity-Pim} and \eqref{matrix-units-generating-ident} we will have that.
\begin{eqnarray}
\pi_m(U[v_1|v_m])&=&\Pi_m(P[\hat{v}_1|\hat{v}_m]C[\hat{v}_1|\hat{v}_m])\nonumber\\
&=&\Pi_m \left(\hat{v}_1\hat{v}_m^\ast+\sum_{j=1}^{m-1}\hat{v}_{j+1}\hat{v}_j^\ast\right)\\
&=&\Pi_m( \hat{v}_1\hat{v}_m^\ast)+\sum_{j=1}^{m-1} \Pi_m( \hat{v}_{j+1}\hat{v}_j^\ast)\nonumber\\
&=&\hat{e}_{1,m}\hat{e}_{m,m}^\ast+\sum_{j=1}^{m-1} \hat{e}_{j+1,m}\hat{e}_{j,m}^\ast\nonumber\\
&=&\begin{bmatrix}
\mathbf{0}_{1\times(m-1)} & 1\\
\mathbf{1}_{m-1} & \mathbf{0}_{(m-1)\times 1}
\end{bmatrix}=C_m
\label{generating-identity-pim}
\end{eqnarray}

The identity \eqref{first-restriction-identity-Pim} also implies that.

\begin{equation}
\pi_m(\mathbf{1}_n)=\Pi_m(\mathbf{1}_n)=\Pi_m(P[\hat{v}_1|\hat{v}_m])=\mathbf{1}_m
\label{generating-unit-pim}
\end{equation}

Since $\Pi_m$ preserves products in $Z(P[\hat{v}_1|\hat{v}_m])$, for any two integers $j,k\geq 1$ we will have that,
\begin{eqnarray}
\pi_m(U[v_1|v_m]^j U[v_1|v_m]^k)&=&\Pi_m(U[v_1|v_m]^j U[v_1|v_m]^k)\nonumber\\
&=&\Pi_m(U[v_1|v_m]^j) \Pi_m(U[v_1|v_m]^k)\nonumber\\
&=&\pi_m(U[v_1|v_m]^j) \pi_m(U[v_1|v_m]^k)
\label{product-preserving-identity-pim-1}
\end{eqnarray}
and also that.
\begin{equation}
\pi_m(U[v_1|v_m]^j)=\Pi_m(U[v_1|v_m]^j)=\Pi_m(U[v_1|v_m])^j=\pi_m(U[v_1|v_m])^j=C_m^j
\label{product-preserving-identity-pim-2}
\end{equation}
By \eqref{generating-unit-pim}, \eqref{product-preserving-identity-pim-1} and \eqref{product-preserving-identity-pim-2}, we will have that the map $\pi_m\in CP(n,m)$ determines an algebra homomorphism from $\mathbb{C}[U[v_1|v_m]]$ onto $\mathrm{Circ}(k)$.
\end{proof}

\begin{definition}
Given $v_1,\ldots,v_m\in \mathbb{C}^{n}\backslash\{\mathbf{0}\}$ with $n\geq 2m$, with corresponding CSF $U[v_1,v_m]\in \mathbb{U}(n)$. The algebra homomorphism $\pi_m$ whose existence is warranteed by \cref{local-pim-homomorphism}, will be called a Circulant representation (CR) for $\mathbb{C}[U[v_1|v_m]]$
\end{definition}

\begin{theorem}
\label{solvability-TC-diagram}
Given $v_1,\ldots,v_m\in \mathbb{C}^n\backslash \{\mathbf{0}\}$ with $n\geq 2m$, there is a projection $P\in \mathbb{C}^{m\times m}$ together two maps $\varphi\in CP(n,n)$ and $\Phi\in CP(m,n)$, such that the following diagram commutes,
\begin{equation}
\xymatrix{
&   & \mathrm{Circ}(m) \ar@{-->}[d]^\Phi\\
& \mathbb{C}[U[v_1|v_m]] \ar[ru]^{\pi_m} \ar@{-->}[r]_\varphi & Z(P)
}
\label{TC-diagram}
\end{equation}
where $\pi_m$ is a CR of $\mathbb{C}[U[v_1|v_m]]$. Moreover, $\Phi$ preserves products on $\mathbb{C}^{m\times m}$ and 
$PU[v_1|v_m]=U[v_1|v_m]P$.
\end{theorem}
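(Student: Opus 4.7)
The plan is to take $P$ to be the central projection $P[\hat{v}_1|\hat{v}_m]$ of \cref{First_Projective_lemma} associated to the orthonormal $m$-system $\hat{v}_1,\ldots,\hat{v}_m$ supplied by \cref{orthonormalization-lemma}, and to manufacture both completely positive maps from the isometric matrix $\hat{V} = [\hat{v}_1~\cdots~\hat{v}_m] \in \mathbb{C}^{n\times m}$ introduced in \cref{projective-morphism}. Explicitly, I set $\Phi = \mathrm{Ad}[\hat{V}] \in CP(m,n)$ and $\varphi = \mathrm{Ad}[P] \in CP(n,n)$, both of which are manifestly completely positive by their very definitions.

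The commutation $P\,U[v_1|v_m] = U[v_1|v_m]\,P$ comes for free: by the construction in \cref{orthonormalization-lemma} one has $U[v_1|v_m] = C[\hat{v}_1|\hat{v}_m]$, and the third bullet of \cref{lema-tool-2} then gives $P[\hat{v}_1|\hat{v}_m]\,C[\hat{v}_1|\hat{v}_m] = C[\hat{v}_1|\hat{v}_m]\,P[\hat{v}_1|\hat{v}_m]$. The product-preserving property of $\Phi$ on $\mathbb{C}^{m\times m}$ is a direct consequence of the isometric identity $\hat{V}^*\hat{V} = \mathbf{1}_m$ established in \cref{orthonormalization-lemma}: for every $A,B \in \mathbb{C}^{m\times m}$, $\Phi(AB) = \hat{V} A B \hat{V}^* = \hat{V} A (\hat{V}^*\hat{V}) B \hat{V}^* = \Phi(A)\Phi(B)$.

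For commutativity of the diagram I appeal to the identity $\hat{V}\hat{V}^* = P[\hat{v}_1|\hat{v}_m] = P$ of \eqref{alternative-central-projection-def}. Then for any $X \in \mathbb{C}[U[v_1|v_m]]$, $\Phi(\pi_m(X)) = \hat{V}(\hat{V}^* X \hat{V})\hat{V}^* = (\hat{V}\hat{V}^*)\,X\,(\hat{V}\hat{V}^*) = P X P = \varphi(X)$, as required. That both maps have range inside $Z(P)$ is then almost automatic: $P\,\varphi(Y) = P^2 Y P = P Y P = \varphi(Y)\,P$ for every $Y \in \mathbb{C}^{n\times n}$, and the absorptions $P\hat{V} = \hat{V}$ and $\hat{V}^*P = \hat{V}^*$ give $P\,\Phi(A) = \hat{V} A \hat{V}^* = \Phi(A)\,P$ for every $A \in \mathbb{C}^{m\times m}$.

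No serious obstacle is expected. The principal technical content has already been packed into \cref{First_Projective_lemma,lema-tool-2,orthonormalization-lemma,projective-morphism,local-pim-homomorphism}, and the theorem essentially amounts to assembling those results into a single commutative diagram. The one bookkeeping point requiring attention is that $\varphi$ must be defined on all of $\mathbb{C}^{n\times n}$, not merely on the subalgebra $\mathbb{C}[U[v_1|v_m]]$; selecting the natural global extension $\varphi = \mathrm{Ad}[P]$, rather than an ad hoc definition on the subalgebra only, lets both complete positivity and the agreement $\varphi = \Phi \circ \pi_m$ on $\mathbb{C}[U[v_1|v_m]]$ be verified by the same short computation.
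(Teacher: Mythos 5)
Your proof is correct and follows essentially the same route as the paper's: you take $P = P[\hat{v}_1|\hat{v}_m]$, $\varphi = \mathrm{Ad}[P]$, $\Phi = \mathrm{Ad}[\hat{V}]$, verify product preservation via $\hat{V}^\ast\hat{V} = \mathbf{1}_m$, deduce commutativity from $\hat{V}\hat{V}^\ast = P$, and get $PU[v_1|v_m] = U[v_1|v_m]P$ from \cref{lema-tool-2}, which is exactly what the paper does. Your added observation that $\Phi(\mathbb{C}^{m\times m}) \subseteq Z(P)$ via the absorptions $P\hat{V} = \hat{V}$ and $\hat{V}^\ast P = \hat{V}^\ast$ is a small, correct bonus the paper leaves implicit.
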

\begin{proof}
By \ref{orthonormalization-lemma} there is an orthonormal $m$-system $\hat{v}_1,\ldots,\hat{v}_m\in \mathbb{C}^n$ together with projection $P[\hat{v}_1|\hat{v}_m]$ such that $P[\hat{v}_1|\hat{v}_m]\hat{v}_j=\hat{v}_j$ for each $1\leq j\leq m$. As a consequence of the argument implemented in the proof of \cref{local-pim-homomorphism}, we have that by \cref{projective-morphism}, there is a matrix $\hat{V}\in \mathbb{C}^{n\times m}$ such that $\pi_m=\mathrm{Ad}[\hat{V}^\ast]|_{\mathbb{C}[U[v_1|v_m]]}$ and $P[\hat{v}_1|\hat{v}_m]=\hat{V}\hat{V}^\ast$.

Let us set. 
\begin{equation}
\left\{
\begin{array}{l}
P=P[\hat{v}_1|\hat{v}_m]\\
\varphi=\mathrm{Ad}[P]\\
\Phi=\mathrm{Ad}[\hat{V}]
\end{array}
\right.
\label{TC-objects-def}
\end{equation}
It is clear that $\varphi\in CP(n,n)$ and $\Phi\in CP(m,n)$. Given $X\in \mathbb{C}^{n\times n}$, we will have that.
\begin{equation}
\varphi(X)=P[\hat{v}_1|\hat{v}_m]XP[\hat{v}_1|\hat{v}_m]=\hat{V}\hat{V}^\ast X\hat{V}\hat{V}^\ast=\hat{V}\Pi_m( X)\hat{V}^\ast=\Phi(\Pi_m(X))
\label{composition-identity-1}
\end{equation}

Since $\hat{V}^\ast \hat{V}=\mathbf{1}_m$ we will have that for any two $X,Y\in \mathbb{C}^{m\times m}$.
\begin{equation}
\Phi(XY)=\hat{V}XY\hat{V}^\ast=\hat{V}X\hat{V}^\ast\hat{V}Y\hat{V}^\ast=\Phi(X)\Phi(Y)
\label{prod-preserver-Phi}
\end{equation}

Since $P=P[\hat{v}_1|\hat{v}_m]$ is a projection, for any $X\in \mathbb{C}^{n\times n}$, we will have that $P\varphi(X)=P^2XP=PXP^2=\varphi(X)P$. This imples that $\varphi(\mathbb{C}^{n\times n})\subseteq Z(P)$.

By \eqref{lema-tool-2} we have that $PU[v_1|v_m]=P[\hat{v}_1|\hat{v}_m]C[\hat{v}_1|\hat{v}_m]=C[\hat{v}_1|\hat{v}_m]P[\hat{v}_1|\hat{v}_m]=U[{v}_1|{v}_m]P$.

By \eqref{composition-identity-1} we will have that $\varphi$ has a representation of the form.
\begin{equation}
\varphi=\Phi\circ \Pi_m
\label{composition-identity-2}
\end{equation}
By \eqref{composition-identity-2} we will have that. 
\begin{equation}
\varphi|_{\mathbb{C}[U[v_1|v_m]]}=\Phi\circ \Pi_m|_{\mathbb{C}[U[v_1|v_m]]}=\Phi\circ \pi_m
\label{composition-identity-3}
\end{equation}
This completes the proof.
\end{proof}

Given a discrete-time dynamical system $(\Sigma,\{\Theta_t\})$ with $\Sigma\subseteq \mathbb{C}^{n\times n}$, and a $m$-system of (history) vectors $\mathbb{H}[\Sigma,m]=\{v_1,\ldots,v_m\}$ for $(\Sigma,\{\Theta_t\})$, the matrix $\hat{\mathbb{V}}\in \mathbb{C}^{n\times m}$ defined by the formula \eqref{history_matrix_svd-4} for $\{v_1,\ldots,v_m\}$, will be called the orthonormal history factor (OHF) of $\mathbb{H}[\Sigma,m]$.

\begin{theorem}
\label{main-topological-control-result}
Given $\varepsilon>0$, an integer $T>0$, and a discrete-time $T$-periodic dynamical system $(\Sigma,\{\Theta_t\})$ with $\Sigma\subseteq \mathbb{C}^{n\times n}$, then $(\Sigma,\{\Theta_t\},\mathbb{H}[\Sigma,m])$ is $(\mathbb{S}^1,T,\varepsilon)$-controlled, for every $\varepsilon>0$ and each $m\in\mathbb{Z}$ such that $2T\leq 2m\leq n$.
\end{theorem}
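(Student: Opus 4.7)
The plan is to apply the machinery built in \cref{orthonormalization-lemma,solvability-TC-diagram} directly to the first period of iterates and exploit $T$-periodicity to turn every approximation bound into an exact equality, so that any $\varepsilon>0$ is admissible.

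To avoid the notational clash between the period $T$ in the statement and the projection $T$ produced by \cref{orthonormalization-lemma}, I would first write $p:=T$ for the period and extract the first $p$ history vectors $w_j:=\Theta_{j-1}(v_1)$, $1\le j\le p$, which are generically nonzero and linearly independent. Since $2p\le 2m\le n$, \cref{orthonormalization-lemma} applies to $w_1,\ldots,w_p$ and yields an orthonormal system $\hat w_1,\ldots,\hat w_p$, scalars $\rho,\kappa\in\mathbb{C}$, projections $\hat K,\hat T\in\mathbb{C}^{n\times n}$, and a unitary $U:=U[w_1|w_p]$ with $\hat T w_1=\rho\hat w_1$, $U\hat w_j=\hat w_{j+1}$ and $U\hat w_p=\hat w_1$, and $\hat K\kappa\hat w_j=w_j$.

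Setting $Z:=U$ places $\sigma(Z)\subseteq\mathbb{S}^1$, so the manifold condition $\mathbb{M}=\mathbb{S}^1$ is met. Invoking \cref{solvability-TC-diagram} with parameter $p$ produces the projection $P=P[\hat w_1|\hat w_p]$ (with $PZ=ZP$ and $P\hat w_j=\hat w_j$), the CP maps $\varphi=\mathrm{Ad}[P]\in CP(n,n)$ and $\Phi=\mathrm{Ad}[\hat V]\in CP(p,n)$, and the circulant representation $\pi_p=\mathrm{Ad}[\hat V^\ast]|_{\mathbb{C}[Z]}$ with $\varphi|_{\mathbb{C}[Z]}=\Phi\circ\pi_p$. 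I would promote $\Pi_p:=\mathrm{Ad}[\hat V^\ast]\in CP(n,p)$ to a global CP map; then the diagram of \cref{solvability-TC-diagram} gives $\|\varphi(X)-\Phi\circ\Pi_p(X)\|=0$ for every $X\in\mathbb{C}[Z]$. For the polynomial family I take $f_k(z):=(\kappa/\rho)\,z^{k\bmod p}$ for $0\le k\le m-1$, and the unitary representation $\rho_p:\mathbb{C}[\mathbb{Z}/p]\to\mathbb{C}^{p\times p}$ sending $\hat 1\mapsto C_p$, consistent with $\pi_p(Z)=C_p$.

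The control identity is then a direct computation. Using $p$-periodicity to write $\Theta_k(v_1)=w_{(k\bmod p)+1}$ and combining $PZ=ZP$, $P^2=P$, $P\hat w_1=\hat w_1$, and $\hat K\kappa\hat w_j=w_j$, one obtains
\[
\hat K\,\varphi(f_k(Z))\,\hat T v_1 \;=\; \tfrac{\kappa}{\rho}\,\hat K\, P Z^{k\bmod p} P\,(\rho\hat w_1) \;=\; \kappa\,\hat K\,\hat w_{(k\bmod p)+1} \;=\; w_{(k\bmod p)+1} \;=\; \Theta_k(v_1),
\]
so the approximation error vanishes identically, with $\tilde v=v_1$. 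Combined with the exact factorization $\varphi=\Phi\circ\Pi_p$ on $\mathbb{C}[Z]$, this shows that $(\Sigma,\{\Theta_t\},\mathbb{H}[\Sigma,m])$ is $(\mathbb{S}^1,p,\varepsilon)$-controlled for every $\varepsilon>0$.

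The main obstacle I anticipate is twofold: first, the verification that $\varphi(Z^j)=PZ^jP=Z^jP$ via the commutation $PZ=ZP$ from \cref{solvability-TC-diagram}, which is what allows the scalar $\kappa/\rho$ to be absorbed cleanly into $f_k$ and turns the projection-sandwich $\hat K(\cdot)\hat T$ into the cyclic action of $Z$ on $\hat w_1$; second, the degenerate case where the first $p$ orbit vectors fail to be linearly independent, so that the SVD underlying \cref{orthonormalization-lemma} becomes singular. The latter is handled by replacing $p$ by the minimal period of the orbit of $v_1$, which divides $T$ and still satisfies $2p\le n$, preserving the entire argument.
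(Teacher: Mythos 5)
Your argument follows the paper's own proof essentially step for step: first period of history vectors, \cref{orthonormalization-lemma} for the orthonormal system, scalars, projections and CSF, then \cref{solvability-TC-diagram} for $P$, $\varphi=\mathrm{Ad}[P]$, $\Phi$ and $\pi_T$, the polynomial family $f_k(z)=(\kappa/\rho)z^{k}$, and finally the telescoping computation $\hat K\varphi(f_k(Z))\hat T v_1=\Theta_k(v_1)$ giving exact equality, hence $(\mathbb{S}^1,T,\varepsilon)$-control for all $\varepsilon>0$. The only deviations are cosmetic (renaming the period $p$ to avoid the clash with the projection $T$, and writing $z^{k\bmod p}$ explicitly, which is immaterial since $U^T=\mathbf{1}_n$), plus a flagged concern about possible linear dependence of the first $T$ orbit vectors in the underlying SVD, which the paper itself does not address in \cref{orthonormalization-lemma}.
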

\begin{proof}
Given $\varepsilon>0$, and any vector history $\mathbb{H}[\Sigma,m]=\{v_1,\ldots,v_m\}\subset \mathbb{C}^n$ for $(\Sigma,\{\Theta_t\})$, since $2T\leq 2m\leq n$, we can apply \cref{orthonormalization-lemma} to compute an orthonormal $T$-system $\{\hat{v}_1,\ldots,\hat{v}_T\}\in\mathbb{C}^n$, scalars $\kappa,\rho$, a CSF $U[v_1|v_T]\in\mathbb{C}^{n\times n}$ and two projections $\hat{K},\hat{T}\in \mathbb{C}^{n\times n}$, that satisfy \eqref{cyclic-constraints}.

By \cref{solvability-TC-diagram} we will have that there are a projection $P\in \mathbb{C}^{n\times n}$, $\varphi\in CP(n,n)$ and a product preserver $\Phi\in CP(T,n)$ such that $\varphi=\Phi\circ \pi_T$ and $\varphi(\mathbb{C}[U[v_1|v_T]])\subseteq Z(P)$.

Since $\pi_T$ and $\Phi$ are linear product preservers in $\mathbb{C}[U[v_1|v_T]]$ and $\mathbb{C}^{T\times T}$, respectively, we will have that for any $p\in \mathbb{C}[z]$.

\begin{equation}
\varphi(p(U[v_1|v_T]))=\Phi(\pi_m(p(U[v_1|v_T])))=p(\Phi(\pi_m(U[v_1|v_T])))=p(\varphi(U[v_1|v_T]))
\label{polynomial-preserver-identity}
\end{equation}

By \cref{solvability-TC-diagram} we will have that,
\begin{equation}
\|\varphi(X)-\Phi\circ\pi_T(p(U[v_1|v_T])))\|=0<\varepsilon
\label{algebraic-rep-identity}
\end{equation}
for each $X\in \mathbb{C}[U[v_1|v_T]]$.

By \eqref{TC-objects-def} we have that $P=P[\hat{v}_1|\hat{v}_T]$, with $P[\hat{v}_1|\hat{v}_T]$ defined by equation \eqref{p-cycle-proj-def}, by \cref{First_Projective_lemma} we will have that.

\begin{equation}
P\hat{v}_1=P[\hat{v}_1|\hat{v}_T]\hat{v}_1=\hat{v}_1
\label{projective-preserved-vector}
\end{equation}

Let $p_k(z)=\frac{\kappa}{\rho} z^k$, $0\leq k\leq m-1$, this impies that $p_k\in \mathbb{C}[z]$. By \cref{orthonormalization-lemma} and by \eqref{projective-preserved-vector}, for each $0\leq k\leq m-1$ we will have that.
\begin{eqnarray}
\hat{K}\varphi(p_k(U[v_1|v_T]))\hat{T}v_1&=&\hat{K}\frac{\kappa}{\rho}U[v_1|v_T]^k P\rho\hat{v}_1\nonumber\\
&=& \hat{K}\kappa U[v_1|v_T]^k P\hat{v}_1\nonumber\\
&=& \hat{K}\kappa U[v_1|v_T]^k\hat{v}_1\nonumber\\
&=& \hat{K}\kappa \hat{v}_{1+k}=v_{k+1}=\Theta_{k}(v_1)
\label{algebraic-evolution-identity}
\end{eqnarray}

By \eqref{algebraic-rep-identity} and \eqref{algebraic-evolution-identity}, we will have that $(\Sigma,\{\Theta_t\},\mathbb{H}[\Sigma,m])$ is $(\mathbb{S}^1,T,\varepsilon)$-controlled by $(\mathbb{S}^1,U[v_1|v_T],\hat{K},\hat{T},\varphi,\{p_t\})$.
\end{proof}

\begin{theorem}
\label{main_algebraic_control_result}
Given $\varepsilon>0$, an integer $T>0$, and a discrete-time $T$-periodic dynamical system $(\Sigma,\{\Theta_t\})$ with $\Sigma\subseteq \mathbb{C}^{n\times n}$, then $(\Sigma,\{\Theta_t\},\mathbb{H}[\Sigma,m])$ is $(\mathbb{Z}/T,\varepsilon)$-controlled, for every $\varepsilon>0$ and each $m\in\mathbb{Z}$ such that $2T\leq 2m\leq n$.
\end{theorem}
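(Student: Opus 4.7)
The plan is to upgrade \cref{main-topological-control-result} by replacing the polynomial calculus on the CSF $U[v_1|v_T]$ with an honest group-algebra representation of $\mathbb{C}[\mathbb{Z}/T]$ into $\mathbb{C}^{T\times T}$, factored through the product-preserving map $\Phi$ supplied by \cref{solvability-TC-diagram}. Concretely, I would take $C_T\in\mathrm{Circ}(T)$ as the image $\rho_T(\hat{1})$ of the canonical generator of $\mathbb{Z}/T$, so that the commuting diagram \eqref{TC-diagram} already encodes the required factorization and very little new work is needed.

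First, given a history $\mathbb{H}[\Sigma,m]=\{v_1,\ldots,v_m\}$ with $2T\leq 2m\leq n$, I would apply \cref{orthonormalization-lemma} to the first $T$ history vectors $v_1,\ldots,v_T$ to obtain an orthonormal $T$-system $\hat{v}_1,\ldots,\hat{v}_T$, scalars $\kappa,\rho\in\mathbb{C}$, projections $\hat{K},\hat{T}\in\mathbb{C}^{n\times n}$, and the CSF $U[v_1|v_T]$ realizing the cyclic constraints \eqref{cyclic-constraints}. Then \cref{solvability-TC-diagram} provides the matrix $\hat{V}=[\hat{v}_1~\cdots~\hat{v}_T]\in\mathbb{C}^{n\times T}$ with $\hat{V}^\ast\hat{V}=\mathbf{1}_T$, together with the product-preserving map $\Phi=\mathrm{Ad}[\hat{V}]\in CP(T,n)$ satisfying $\Phi(C_T)=\hat{V}C_T\hat{V}^\ast=P[\hat{v}_1|\hat{v}_T]\,U[v_1|v_T]$ and $\Phi(XY)=\Phi(X)\Phi(Y)$ for every $X,Y\in\mathbb{C}^{T\times T}$.

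Next, I would define the unitary representation $\rho_T:\mathbb{C}[\mathbb{Z}/T]\to\mathbb{C}^{T\times T}$ by linear extension of the rule $\rho_T(\hat{k})=C_T^k$; since $C_T\in\mathbb{U}(T)$ and $C_T^T=\mathbf{1}_T$ by \cref{min-pol-C1m}, $\rho_T$ is a well-defined unitary algebra representation. Taking $\varphi=\Phi$ as the algebra homomorphism, the polynomials $f_k(z)=\tfrac{\kappa}{\rho}z^k$ (so that $f_k(\hat{1})=\tfrac{\kappa}{\rho}\hat{k}\in\mathbb{C}[\mathbb{Z}/T]$), $\tilde{v}=v_1$, and the projections $\hat{K},\hat{T}$ already built in the first step, the estimate reduces to the exact chain
\[
\hat{K}\,\varphi\!\circ\!\rho_T(f_k(\hat{1}))\,\hat{T}v_1
=\hat{K}\,\Phi\!\bigl(\tfrac{\kappa}{\rho}C_T^k\bigr)\rho\hat{v}_1
=\kappa\,\hat{K}\,\Phi(C_T)^k\hat{v}_1
=\kappa\,\hat{K}\,\hat{v}_{(k~\mathrm{mod}~T)+1}
=v_{(k~\mathrm{mod}~T)+1}
=\Theta_k(v_1),
\]
obtained by applying in order: $\hat{T}v_1=\rho\hat{v}_1$ from \eqref{cyclic-constraints}; product-preservation of $\Phi$; iteration of the identity $\Phi(C_T)\hat{v}_j=P[\hat{v}_1|\hat{v}_T]U[v_1|v_T]\hat{v}_j=\hat{v}_{(j~\mathrm{mod}~T)+1}$ obtained from \cref{First_Projective_lemma} and \cref{lema-tool-2}; the scaling $\hat{K}\kappa\hat{v}_j=v_j$ from \eqref{cyclic-constraints}; and finally the $T$-periodicity of $\{\Theta_t\}$.

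The only delicate point I anticipate is that $\varphi=\Phi$ must act as a genuine algebra homomorphism on the \emph{whole} of $\mathbb{C}^{T\times T}$, not merely on $\pi_T(\mathbb{C}[U[v_1|v_T]])$; this is precisely equation \eqref{prod-preserver-Phi} inside the proof of \cref{solvability-TC-diagram}, which depends only on $\hat{V}^\ast\hat{V}=\mathbf{1}_T$ and is already available. Everything else is bookkeeping and the $\varepsilon$-slack is in fact never needed---equality holds for every $k\geq 0$.
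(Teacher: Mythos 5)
Your proposal is correct and follows essentially the same route as the paper: both obtain the CSF $U[v_1|v_T]$ from \cref{orthonormalization-lemma}, pass to $C_T=\pi_T(U[v_1|v_T])$ with $C_T^T=\mathbf{1}_T$, define $\rho_T$ by universality via $\hat{1}\mapsto C_T$, and factor the evolution through $\Phi\circ\rho_T$. You are merely more explicit than the paper in carrying out the final chain of equalities and in identifying $\Phi$ (rather than the earlier $\varphi$ from the $(\mathbb{S}^1,T,\varepsilon)$-control) as the algebra homomorphism $\mathbb{C}^{T\times T}\to\mathbb{C}^{n\times n}$ that the definition of $(\mathbb{Z}/T,\varepsilon)$-control actually requires.
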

\begin{proof}
Given $\varepsilon>0$, by \cref{main-topological-control-result}, we will have that $(\Sigma,\{\Theta_t\},\mathbb{H}[\tilde{\Sigma},m])$ is $(\mathbb{S}^1$ $,$ $T$ $,$ $\varepsilon)$-controlled by some control $(\mathbb{S}^1,U[v_1|v_T],\hat{K},\hat{T},\varphi,\{p_t\})$.

We have that \cref{main-topological-control-result} also implies that there is an algebra homomorphism 
$\Phi:\mathbb{C}^{m\times m}\to \mathbb{C}^{n\times n}$ such that $\|\varphi(X)-\Phi\circ \pi_T(X)\|\leq \varepsilon$, for each $X\in \mathbb{C}[U[v_1|v_T]]$.

Since $U[v_1|v_T]\in \mathbb{U}(n)$, we will have that $C_{T}=\pi_m(U[v_1|v_T])\in \mathbb{U}(m)$ and also that.
\[
C_T^T=\pi_T(U[v_1|v_m])^T=\pi_T(U[v_1|v_m]^T)=\pi_T(\mathbf{1}_n)=\mathbf{1}_m
\]

By universality of $\mathbb{C}[\mathbb{Z}/T]$ there is a representation $\rho_T:\mathbb{C}[\mathbb{Z}/T]\to \mathbb{U}(T)$, determined by the assignment $\hat{1}\mapsto C_T$ for $\hat{1}\in \mathbb{Z}/T$.

Applying universality of $\mathbb{C}[\mathbb{Z}/T]$ to $\pi_T(\mathbb{C}[U[v_1|v_T]])$ we have that, 
\[
\pi_T(p_t(U[v_1|v_T]))=p_t(\pi_T(U[v_1|v_T]))=p_t(C_T)=p_t(\rho_T(\hat{1}))=\rho_T(p_t(\hat{1}))
\]
for each $0\leq t\leq m-1$. This completes the proof.
\end{proof}

\begin{theorem}
\label{top_geo_control}
Given $\delta,\varepsilon>0$, every $(T,\delta,\varepsilon)$-almost-periodic dynamical system $(\Sigma,\{\Theta_t\})$ with $\Sigma\subseteq \mathbb{C}^n$ is ($\mathbb{S},T,\varepsilon$)-controlled, whenever $2T\leq 2m\leq n$.
\end{theorem}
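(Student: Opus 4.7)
The plan is to reduce the almost-periodic case to the purely $T$-periodic case already handled by \cref{main-topological-control-result}, and then transfer the conclusion through the approximation bound built into the definition of $(T,\delta,\varepsilon)$-almost-periodicity. The argument will be essentially a triangle-inequality coupling between the original trajectory and the trajectory of the approximating periodic system, after the latter has been controlled exactly.

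First I would unpack the definition of $(T,\delta,\varepsilon)$-almost-periodicity to produce a witness $T$-periodic system $(\tilde{\Sigma},\{\tilde{\Theta}_t\})$ that accompanies $(\Sigma,\{\Theta_t\})$. Fixing any history $\mathbb{H}[\Sigma,m]=\{v_1,\ldots,v_m\}$ for $(\Sigma,\{\Theta_t\})$, I would select $\tilde{v}_1\in \tilde{\Sigma}$ satisfying $\|v_1-\tilde{v}_1\|\leq \delta$, which is available by the for-all-$x$ clause in the definition, and propagate it under the periodic dynamics to form $\tilde{v}_k=\tilde{\Theta}_{k-1}(\tilde{v}_1)$ for $1\leq k\leq m$. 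This yields a legitimate history $\mathbb{H}[\tilde{\Sigma},m]=\{\tilde{v}_1,\ldots,\tilde{v}_m\}$ for the periodic companion, and because of $(T,\delta,\varepsilon)$-almost-periodicity one has $\|\Theta_k(v_1)-\tilde{\Theta}_k(\tilde{v}_1)\|_2\leq \varepsilon$ for every $k\in\mathbb{Z}_0^+$.

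Second I would apply \cref{main-topological-control-result} to the triple $(\tilde{\Sigma},\{\tilde{\Theta}_t\},\mathbb{H}[\tilde{\Sigma},m])$. Since $2T\leq 2m\leq n$, this produces a topological control $(\mathbb{S}^1,U[\tilde{v}_1|\tilde{v}_T],\hat{K},\hat{T},\varphi,\{p_t\})$, together with the CP factorization $\varphi=\Phi\circ \pi_T$ furnished by \cref{solvability-TC-diagram}, such that $\hat{K}\varphi(p_k(U[\tilde{v}_1|\tilde{v}_T]))\hat{T}\tilde{v}_1=\tilde{\Theta}_k(\tilde{v}_1)$ holds exactly, and $\|\varphi(X)-\Phi\circ\pi_T(X)\|=0$ on $\mathbb{C}[U[\tilde{v}_1|\tilde{v}_T]]$. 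At this point all the algebraic content of an $(\mathbb{S}^1,T,\varepsilon)$-control is already in place; only the evolution matching still needs to be checked against the original, unperturbed trajectory.

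Third, I would set $\tilde{v}=\tilde{v}_1$ in the auxiliary vector slot permitted by the definition of $(\mathbb{S}^1,T,\varepsilon)$-control and combine the exact periodic identity with the almost-periodic bound via the triangle inequality, obtaining
\begin{equation*}
\|\Theta_k(v_1)-\hat{K}\varphi(p_k(U[\tilde{v}_1|\tilde{v}_T]))\hat{T}\tilde{v}\|_2 = \|\Theta_k(v_1)-\tilde{\Theta}_k(\tilde{v}_1)\|_2 \leq \varepsilon
\end{equation*}
for every $k\in\mathbb{Z}_0^+$. This, together with the already established $\|\varphi(X)-\Phi\circ\pi_T(X)\|=0\leq\varepsilon$, exactly matches the $(\mathbb{S}^1,T,\varepsilon)$-control definition and finishes the proof. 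There is no substantive obstacle in this argument; the only thing one must be careful about is routing the auxiliary vector $\tilde{v}$ through the periodic companion rather than the original state, so that the exact identity provided by the periodic case can collide cleanly with the almost-periodic approximation bound.
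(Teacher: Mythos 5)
Your argument is correct and follows essentially the same route as the paper's own proof: extract the $T$-periodic companion $(\tilde{\Sigma},\{\tilde{\Theta}_t\})$ from the almost-periodicity definition, apply \cref{main-topological-control-result} to obtain an exact $(\mathbb{S}^1,T,0)$-control of the companion anchored at $\tilde{v}_1$, and then transfer the conclusion to $(\Sigma,\{\Theta_t\})$ by a triangle inequality using the bound $\|\Theta_t(x)-\tilde{\Theta}_t(\tilde{x})\|_2\leq\varepsilon$. The only difference is cosmetic: you spell out the construction of the companion history $\{\tilde{v}_k\}$ by propagating $\tilde{v}_1$ under $\tilde{\Theta}$, where the paper simply asserts the existence of such a history.
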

\begin{proof}
Let $x\in \Sigma$. Since $(\Sigma,\{\Theta_t\})$ is $(T,\delta,\varepsilon)$-almost-periodic, we will have that there is a discrete-time $T$-periodic dynamical system $(\tilde{\Sigma},\tilde{\Theta}_t)$ such that there is $\tilde{x}\in \tilde{\Sigma}$ such that $\|x-\tilde{x}\|\leq \delta$, and $\|\Theta_t(x)-\tilde{\Theta}_t(\tilde{x})\|_2\leq \varepsilon$ for each $t\in \mathbb{Z}_0^+$ and every $(x,\tilde{x})\in \Sigma\times \tilde{\Sigma}$.

By \cref{main-topological-control-result} we will have that $(\tilde{\Sigma},\{\tilde{\Theta}_t\},\mathbb{H}[\tilde{\Sigma},T])$ is $(\mathbb{S}^1,T,0)$-controlled by some control $(\mathbb{S}^1,U[v_1|v_T],\hat{K},\hat{T},\varphi,\{p_t\})$, for some $T$-system of history vectors $\mathbb{H}[\tilde{\Sigma},T]$ $=$ $\{v_1$ $,$ $\ldots$ $,$ $v_T\}$ with $v_1=\tilde{x}$, this implies that for each $0\leq t\leq T$.
\begin{eqnarray}
\|\Theta_t(x)-\hat{K}\varphi(p_t(Z))\hat{T}\tilde{x}\|_2&\leq& \|\Theta_t(x)-\tilde{\Theta}_t(\tilde{x})\|_2+\|\tilde{\Theta}_t(\tilde{x})-\hat{K}\varphi(p_t(Z))\hat{T}\tilde{x}\|_2\nonumber\\
&\leq&\|\Theta_t(x)-\tilde{\Theta}_t(\tilde{x})\|_2\leq \varepsilon
\label{normed_control_constraints}
\end{eqnarray}
By \eqref{normed_control_constraints} we have that $(\Sigma,\{\Theta_t\})$ is $(\mathbb{S}^1,T,\varepsilon)$-controlled by $(\mathbb{S}^1$ $,$ $U[v_1|v_T],\hat{K}$ $,$ $\hat{T}$ $,$ $\varphi,\{p_t\})$. This completes the proof.
\end{proof}

\begin{theorem}
\label{top_alg_control}
Given $\delta,\varepsilon>0$, every $(T,\delta,\varepsilon)$-almost-periodic dynamical system $(\Sigma,\{\Theta_t\})$ with $\Sigma\subseteq \mathbb{C}^n$ is ($\mathbb{Z}/T,\varepsilon$)-controlled, whenever $2T\leq n$.
\end{theorem}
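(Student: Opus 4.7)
The plan is to mirror the argument of \cref{top_geo_control}, replacing the appeal to \cref{main-topological-control-result} with one to \cref{main_algebraic_control_result}, so that the resulting approximant is expressed through a unitary representation of $\mathbb{C}[\mathbb{Z}/T]$ rather than through $\mathbb{S}^1$-spectral data. The almost-periodicity hypothesis plays exactly the same role as in the geometric case, namely to smuggle the problem onto an auxiliary periodic system where the algebraic theorem already applies.

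First, given $x\in\Sigma$, I would invoke the $(T,\delta,\varepsilon)$-almost-periodicity hypothesis to obtain a $T$-periodic reference system $(\tilde{\Sigma},\{\tilde{\Theta}_t\})$ and a point $\tilde{x}\in\tilde{\Sigma}$ satisfying $\|x-\tilde{x}\|\leq \delta$ and $\|\Theta_t(x)-\tilde{\Theta}_t(\tilde{x})\|_2\leq \varepsilon$ for every $t\in\mathbb{Z}_0^+$. Then I would set $m=T$, which is admissible because the assumption $2T\leq n$ forces the chain $2T\leq 2m\leq n$ required by the preceding theorems, and form the $T$-system of history vectors $\mathbb{H}[\tilde{\Sigma},T]=\{v_1,\ldots,v_T\}$ by the orbit rule $v_1=\tilde{x}$, $v_{k+1}=\tilde{\Theta}_k(\tilde{x})$.

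Second, I would apply \cref{main_algebraic_control_result} to $(\tilde{\Sigma},\{\tilde{\Theta}_t\},\mathbb{H}[\tilde{\Sigma},T])$ to produce a unitary representation $\rho_T:\mathbb{C}[\mathbb{Z}/T]\to \mathbb{C}^{T\times T}$, an algebra homomorphism $\varphi:\mathbb{C}^{T\times T}\to \mathbb{C}^{n\times n}$, polynomials $\{p_t\}_{t=0}^{T-1}$ and projections $\hat{K},\hat{T}\in\mathbb{C}^{n\times n}$ with $\|\tilde{\Theta}_t(\tilde{x})-\hat{K}\varphi\circ\rho_T(p_t(\hat{1}))\hat{T}\tilde{x}\|_2=0$ for $0\leq t\leq T-1$. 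The triangle inequality would then yield
\[
\|\Theta_t(x)-\hat{K}\varphi\circ\rho_T(p_t(\hat{1}))\hat{T}\tilde{x}\|_2\leq \|\Theta_t(x)-\tilde{\Theta}_t(\tilde{x})\|_2+0\leq \varepsilon,
\]
which exhibits the required $(\mathbb{Z}/T,\varepsilon)$-control of $(\Sigma,\{\Theta_t\})$ with witness vector $\tilde{v}=\tilde{x}$.

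The main obstacle is not analytical but rather definitional bookkeeping: checking that the witness vector $\tilde{v}\in\mathbb{C}^n$ appearing in the definition of $(\mathbb{Z}/T,\varepsilon)$-controllability may legitimately be taken to be $\tilde{x}$, and confirming that the weaker dimensional constraint $2T\leq n$ suffices in place of the $2T\leq 2m\leq n$ condition inherited from \cref{main_algebraic_control_result}. The dimensional point is immediate from the $m=T$ choice, and the witness point is immediate from the construction of $\mathbb{H}[\tilde{\Sigma},T]$ anchored at $\tilde{x}$; no further estimation is needed because \cref{main_algebraic_control_result} already produces an \emph{exact} identity on the periodic side.
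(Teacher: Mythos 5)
Your proof is correct and follows essentially the same route as the paper's: both reduce to a $T$-periodic reference system via almost-periodicity, invoke \cref{main_algebraic_control_result} with $m=T$ to obtain an exact $(\mathbb{Z}/T,0)$-control of the reference system anchored at $\tilde{x}$, and close with the triangle inequality. Your explicit observations that $m=T$ collapses $2T\le 2m\le n$ to $2T\le n$ and that $\tilde{v}=\tilde{x}$ serves as the witness make the bookkeeping slightly more transparent than the paper's version, but the underlying argument is identical.
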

\begin{proof}
Let $x\in \Sigma$. Since $(\Sigma,\{\Theta_t\})$ is $(T,\delta,\varepsilon)$-almost-periodic, we will have that there is a discrete-time $T$-periodic dynamical system $(\tilde{\Sigma},\tilde{\Theta}_t)$ such that there is $\tilde{x}\in \tilde{\Sigma}$ such that $\|x-\tilde{x}\|\leq \delta$, and $\|\Theta_t(x)-\tilde{\Theta}_t(\tilde{x})\|_2\leq \varepsilon$ for each $t\in \mathbb{Z}_0^+$ and every $(x,\tilde{x})\in \Sigma\times \tilde{\Sigma}$.

By \cref{main_algebraic_control_result} we will have that $(\tilde{\Sigma},\{\tilde{\Theta}_t\},\mathbb{H}[\Sigma,T])$ is $(\mathbb{Z}/T,0)$-controlled, this implies that for some $T$-system of history vectors $\mathbb{H}[\tilde{\Sigma},T]$ $=$ $\{v_1$ $,$ $\ldots$ $,$ $v_T\}$ with $v_1=\tilde{x}$, there is a unitary representation $\rho_T:\mathbb{C}[\mathbb{Z}/T]\to \mathbb{C}^{T\times T}$, an algebra homomorphism $\varphi:\mathbb{C}^{T\times T}\to \mathbb{C}^{n\times n}$, a family of functions $\{f_0,\ldots,f_{T-1}\}$, and two projections $\hat{K},\hat{T}\in \mathbb{C}^{n\times n}$ such that for $\hat{1}\in\mathbb{Z}/T$ and for each $t\geq 0$.
\begin{eqnarray}
\|\Theta_t(x)-\hat{K}\varphi\circ\rho_T(f_k(\hat{1}))\hat{T}\tilde{x}\|_2&\leq& \|\Theta_t(x)-\tilde{\Theta}_t(\tilde{x})\|_2\nonumber\\
&&+\|\tilde{\Theta}_t(\tilde{x})-\hat{K}\varphi\circ\rho_T(f_k(\hat{1}))\hat{T}\tilde{x}\|_2\nonumber\\
&\leq&\|\Theta_t(x)-\tilde{\Theta}_t(\tilde{x})\|_2\leq \varepsilon
\label{normed_control_algebraic_constraints}
\end{eqnarray}
By \eqref{normed_control_constraints} we have that $(\Sigma,\{\Theta_t\})$ is $(\mathbb{Z}/T,\varepsilon)$-controlled. This completes the proof.
\end{proof}

Given $\varepsilon>0$, from the definition in \eqref{eq:loc_bil_rep_sigma} of SCL-ROM approximants of a discrete-time system $(\Sigma,\{\Theta_t\})$ with $\Sigma\subseteq \mathbb{C}^n$, we have that by \cref{orthonormalization-lemma} and \cref{top_geo_control}, every discrete-time system $(\Sigma,\{\Theta_t\})$ that is $(\mathbb{S}^1,\varepsilon)$-controlled by some control $(\mathbb{S}^1,Z$ $,$ $K$ $,$ $T$ $,$ $\varphi,\{f_t\})$, has a SCL-ROM approximant of the form \eqref{eq:loc_bil_rep_sigma} with $F_t=\varphi(f_t(Z))$ for $0\leq t\leq T-1$, we say that such approximant is SCL-ROM is determined by the control $(\mathbb{S}^1,Z,K$ $,$ $T$ $,$ $\varphi$ $,$ $\{f_t\})$. We can think of the evolution laws $\hat{v}_{t+1}=\hat{F}_t\hat{v}_t$ for $0\leq t\leq 1$ in SCL-ROM the decomposition \eqref{eq:loc_bil_rep_sigma}, as an embedding of the original system in a manifold, where the evolution of the embedded system is controlled by a unitary matrix determined the CSF of some vector history for the original system.

\section{Algorithm}
\label{sec:alg}

The techniques and compuations used to prove the previous results, can be implemented to derive a prototypical algorithm described by the following block diagram.

\vspace*{1.5pc}

\tikzstyle{block} = [draw, fill=white, rectangle, 
    minimum height=3em, minimum width=6em]
\tikzstyle{sum} = [draw, fill=white, circle, node distance=1cm]
\tikzstyle{input} = [coordinate]
\tikzstyle{output} = [coordinate]
\tikzstyle{pinstyle} = [pin edge={to-,thin,black}]

\begin{equation}
\label{main_block_diagram}
\begin{tikzpicture}[auto, node distance=2cm,>=latex']
    \node [input, name=input] {};
    %\node [sum, right of=input] (sum) {};
    %\node [block, left of=delay] (state) {};
    \node [block, right of=input] (delay) {$Z^{-1}$};
    \node [block, right of=delay, node distance=3cm] (K) {$\hat{K}$};

    \draw [->] (delay) -- node[name=u] {$\hat{x}_t$} (K);
    
    \node [output, right of=K] (output) {};
    \node [block, below of=delay, pin={[pinstyle]below: $\mathbb{C}[\mathbb{Z}/m]$},
            node distance=2cm] (feedback) {$F_t$};

    %\draw [draw,->] (input) -- node {$r$} (sum);
    \draw [->] (input) -- node {$\hat{x}_{t+1}$} (delay);
    \draw [->] (K) -- node [name=y] {$x_t$}(output);
   % \draw [->] (y) |- (measurements);
    \draw [-] (feedback) -| node[name=input] {$$} 
        node [near end] {$$} (input);
    \draw [->] (u) |- (feedback);    
\end{tikzpicture}
\end{equation}

The proofs of \cref{orthonormalization-lemma} and \cref{main-topological-control-result} provide a computational procedure that is sketched in \cref{alg:main_alg}, and can be used to compute the elements ${F_t}_{t=0}^{T-1}$ in diagram \eqref{main_block_diagram}, where each matrix $F_t$ has a reresentation $F_t=\hat{K}\hat{H}_t\hat{T}$, for some matrices $\hat{K}$, $\hat{H}_t$, $\hat{T}$ to be determined by \cref{alg:main_alg}.

\begin{algorithm}[h!]
\caption{Data-driven matrix approximation algorithm}
\label{alg:main_alg}
\begin{algorithmic}
\STATE{{\bf Data:}\:\:\:{\sc Real number $\varepsilon>0$, State History $\mathbb{H}[\Sigma,m]$:} $\{x_t\}_{0\leq t\leq m\leq T}$, $T\in \mathbb{Z}^+$}
\STATE{{\bf Result:}\:\:\: {{\sc Approximate matrix realizations:} $(\hat{K},\hat{H}_t,\hat{T})\in \mathbb{C}^{n\times n}\times \mathbb{C}^{n\times n}\times \mathbb{C}^{n\times n}$ of $(\tilde{\Sigma},\{\Theta_t\})$}}
\begin{enumerate}
\STATE{Compute state/output sampled-data history $\{v_t\}_{0\leq t\leq T}$ of $(\Sigma,\{\Theta_t\})$\;}
\STATE{Compute the SVD $\mathbb{V}S\mathbb{W}=[v_1~\cdots~v_m]$ of $\{v_t\}_{0\leq t\leq m\leq T}$\;}
\STATE{Compute the OHF $\hat{\mathbb{V}}=[\hat{v}_1~\cdots~\hat{v}_m]$ for $\{v_t\}_{0\leq t\leq m\leq  T}$\;}
\STATE{Set $\hat{K}=\mathbb{V}\mathbb{V}^\ast$\;}
\STATE{Set $\hat{T}=\hat{v}_1\hat{v}_1^\ast$\;}
\STATE{For $0\leq t\leq T-1$:}
\begin{enumerate}
\STATE{Compute $p_t\in \mathbb{C}[z]$ such that:\;}
\begin{enumerate}
\STATE{$\|\hat{K}\hat{\mathbb{V}}p_t(C_m^t)\hat{\mathbb{V}}^\ast\hat{T}v_1-\Theta_t[v_1]\|\leq \varepsilon$\;}
\end{enumerate}
\STATE{Set $\hat{H}_t=\hat{\mathbb{V}}p_t(C_m^t)\hat{\mathbb{V}}^\ast$\;}
\end{enumerate}
\end{enumerate}
\RETURN $\{\hat{K},\hat{H}_t,\hat{T}(t)\}_{0\leq t\leq T}$
\end{algorithmic}
\end{algorithm}

\section{Experimental results}
\label{sec:experiments}

%----------------------------------------------------------------------------------------
%	MATERIALS AND METHODS
%----------------------------------------------------------------------------------------

\subsection{Materials and Methods}

In order to solve the diagram \eqref{main_block_diagram}, a prototypical GNU Octave code that implements some of the core computations on which the proofs of \cref{orthonormalization-lemma} and \cref{main-topological-control-result} are based, has been developed as part of this project, using GNU Octave 4.4.1 on a five node Ubuntu Linux Beowulf Cluster, at the Scientific Computing Innovation Center of UNAH-CU.

\subsection{Numerical Experiments}
We will consider three numerical experiments in this section. In \S\ref{waves} we will simulate 1D waves, in \S\ref{damped_waves} we will simulate damped waves on planar material sections, and in \S\ref{vortex} we will simulate vorticity transport.

In \S\ref{waves} the periodicity appears naturally, while in \S\ref{damped_waves} and \S\ref{vortex}, we will think of the model as a movie, that is being streamed more than once. Each example will be used to illustrate the potential applications of topological control to system identification, and for extraction of (almost) periodic patterns from sampled-data discrete-time industrial systems and plants.

\subsubsection{1D Waves} \label{waves} As a first application, let us consider a wave equation under Dirichlet boundary conditions of
the form:
\begin{equation}
\left\{
\begin{array}{l}
\partial_t^2 w-c^2\partial_tx^2 w=0,\\
w(0,t)=w(L,t)=0,\\
w(x,0)=w_0(x),\\
\partial_t w(x,0)=0
\end{array}
\right.
\label{wave_DS}
\end{equation}
for some suitable data of $L,c,w_0$.

The simulation is computed using second order finite difference method combined with second order Crank-Nicolson method. The computation is performed using the following commands.

\begin{verbatim}
>> [x,t,data_wave,Cx,Sx]=CL_ROM_WaveDS([0 1],10);
-------------------------------------------------------------
                     Running simulation:
-------------------------------------------------------------
-------------------------------------------------------------
 Computing circular matrix representations in C[U[v1|vm]]:
-------------------------------------------------------------
\end{verbatim}

The graphical output is shown in \cref{wave_DS_output}.

\begin{figure}[h!]
\begin{center}\vspace{1cm}
\includegraphics[width=0.7\linewidth]{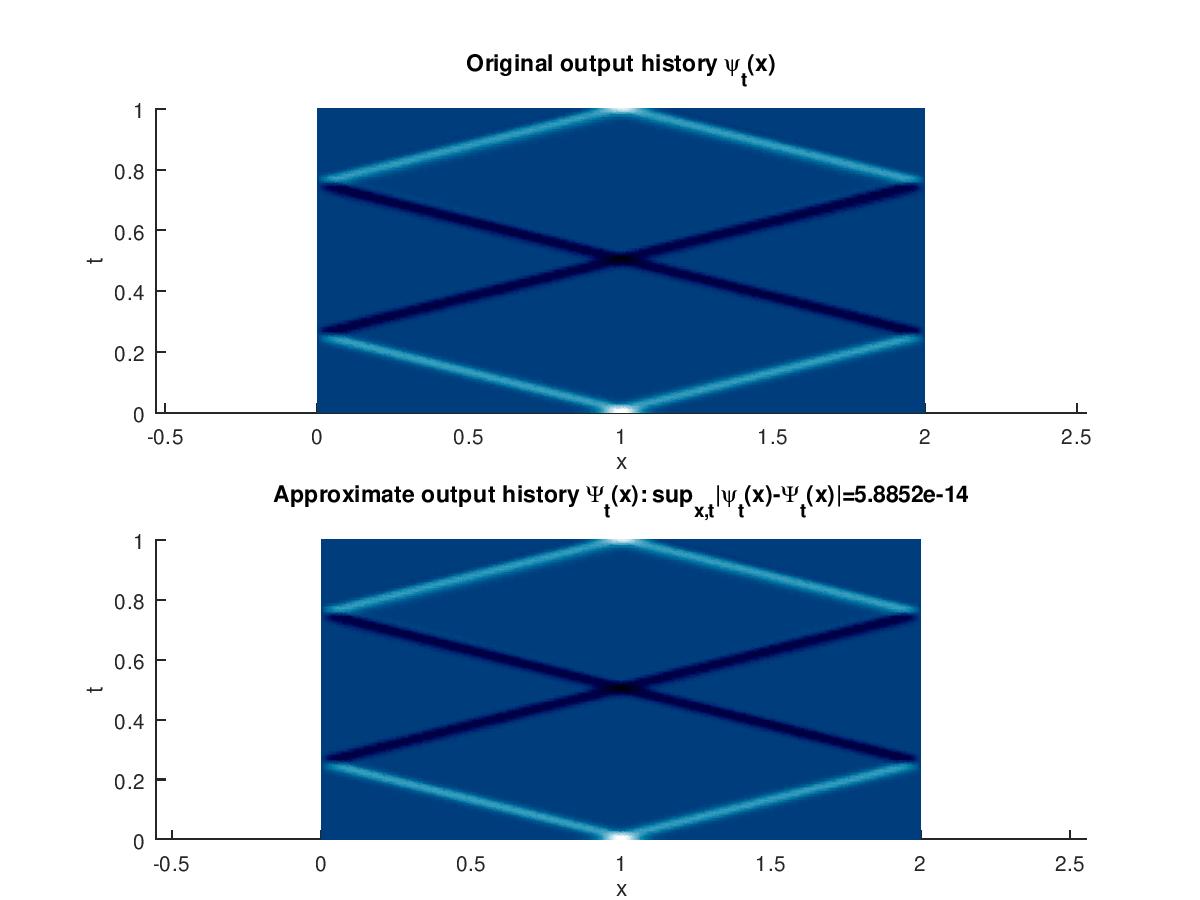}
\caption{$\Sigma$ and $\hat{\Sigma}$ output histories}
\label{wave_DS_output}
\end{center}\vspace{1cm}
\end{figure}

\subsubsection{Cantilever Elastic Plate} \label{damped_waves}
As a second application of \cref{alg:main_alg}, let us consider a computational mechanics problem consisting on the description of the deformation a damped aluminium Cantilever Lam\'e beam model under planar displacement hypotheses, whose deformation displacement vector $v$ is described by a Navier dynamical system of the form:
\begin{equation}
\left\{
\begin{array}{l}
\rho \partial_t^2 v-N(\lambda,\mu)v=u(t),\\
BIC(v)=0,
\end{array}
\right.
\label{Navier_DS}
\end{equation}
where $N(\lambda,\mu)$ is the Navier operator defined by the expression,
\[
N(\lambda,\mu)=(\lambda+\mu)\nabla\nabla\cdot+\mu\nabla^2
\]
where $\lambda,\mu$ are the Lam\'e's coefficients for generic aluminium, and where $BIC(v)=0$ is some system of equations that determines suitable boundary and initial conditions for Cantilever Lam\'e beam deflection. We will have that the input $u(t)$ is determined by the expression $u(t)=c(t)\rho\partial_t v$ for some smooth time dependent coefficient $c(t)$.

It is important to consider that the dimensions of the beam model have been normalized, and that relative deformation displacement scale is exaggerated for visualization purposes of the corresponding simulation. 

In order to create the data corresponding to the beam deformation, we use an Octave m-file function that computes a second order finite difference approximation of the Navier dynamical system \eqref{Navier_DS} for sample sizes $50$, $30$, $10$ and $5$, as follows.

\begin{verbatim}
>> [Bx,By,data_x,data_y,Yx,Yy]=CL_ROM_BeamDS(1,50,40,-5e9,-10,1);
>> [Bx,By,data_x,data_y,Yx,Yy]=CL_ROM_BeamDS(1,30,40,-5e9,-10,2);
>> [Bx,By,data_x,data_y,Yx,Yy]=CL_ROM_BeamDS(1,10,40,-5e9,-10,3);
>> [Bx,By,data_x,data_y,Yx,Yy]=CL_ROM_BeamDS(1,5,40,-5e9,-10,4);
-------------------------------------------------------------
                     Running simulation:
-------------------------------------------------------------
properties =

  scalar structure containing the fields:

    Emod =  73100000000

properties =

  scalar structure containing the fields:

    Emod =  73100000000
    Nu =  0.33000
    Lx =  4
    Ly =  0.40000
    T =  40
    M =  600
    N =  36
    c2 =  0.35971
    delta = -5000000000
    m =  5

Elapsed time is 298.816 seconds.
-------------------------------------------------------------
 Computing circular matrix representations in C[U[v1|vm]]:
-------------------------------------------------------------
Elapsed time is 3.51714 seconds.
-------------------------------------------------------------
 Verifying circular mimetic constraints for C[U[v1|vm]]:
-------------------------------------------------------------
Verification passed...
max{||Kx Rx Usx^k Tx Ux0-Oxk|| | 1<=k<=m} = 2.7082e-27 <= eps
max{||Ky Ry Usy^k Ty Uy0-Oyk|| | 1<=k<=m} = 7.4542e-13 <= eps
-------------------------------------------------------------
For m = 121
For n = 13357
For eps = 7.4542e-13
Elapsed time is 20.7964 seconds.
-------------------------------------------------------------
\end{verbatim}

This produces the original output history shown in Figure \cref{original_output}.

\begin{figure}[h!]
\begin{center}\vspace{1cm}
\includegraphics[width=0.56\linewidth]{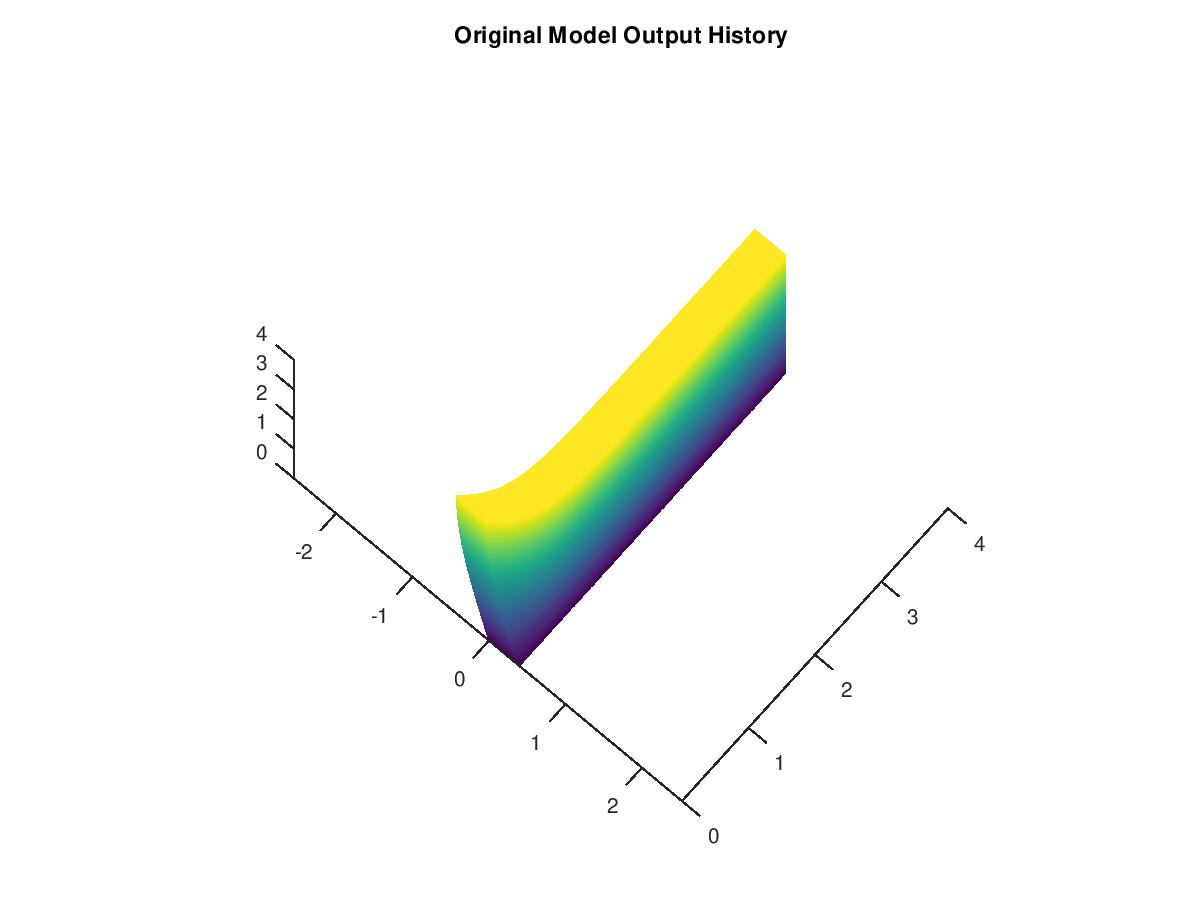}
\caption{Original model $\Sigma$ output history}
\label{original_output}
\end{center}\vspace{1cm}
\end{figure}

The output histories of the corresponding SCL-ROM approximants are shown in Figure \ref{SCL-ROM_outputs}.

\begin{figure}[h!]
\begin{center}\vspace{1cm}
\includegraphics[width=0.56\linewidth]{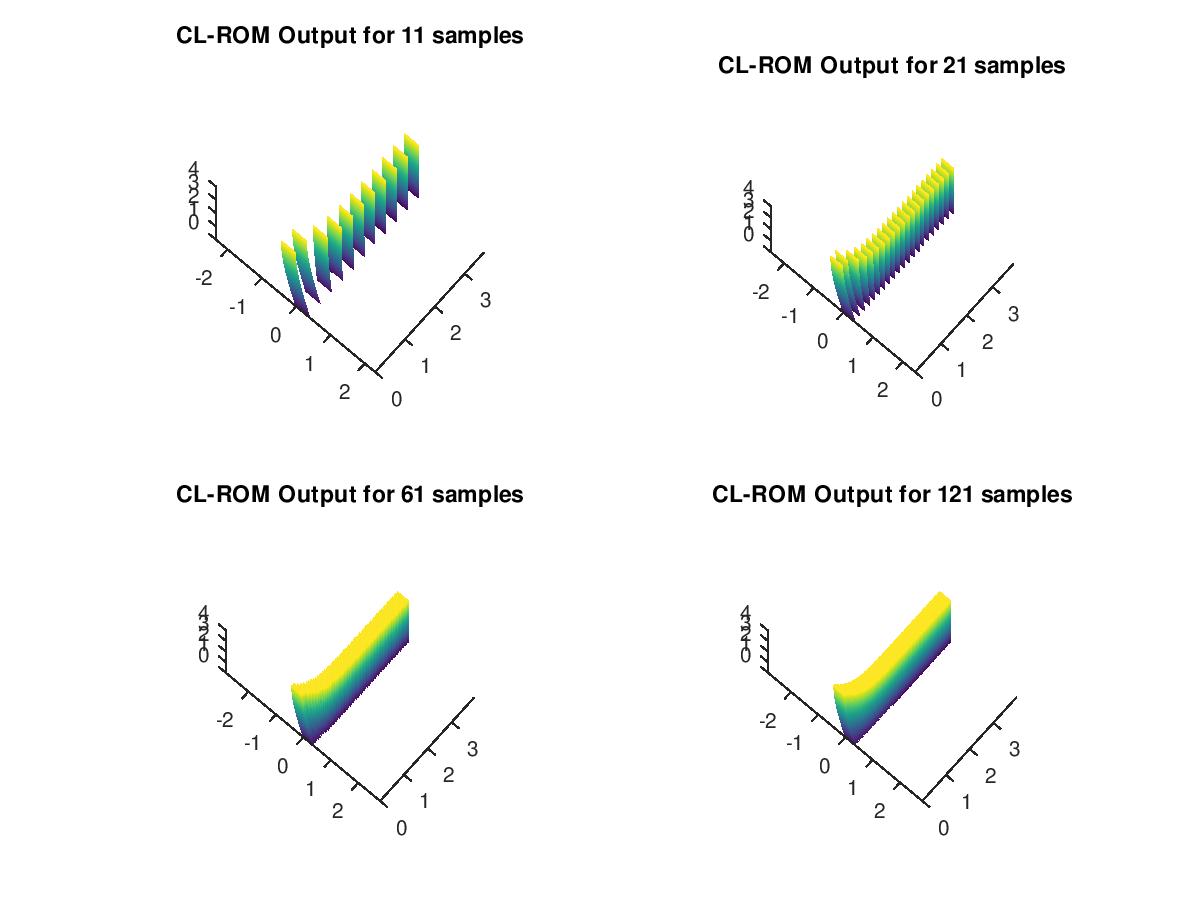}
\caption{ Output histories for several SCL-ROM approximants $\tilde{\Sigma}$ of $\Sigma$}
\label{SCL-ROM_outputs}
\end{center}\vspace{1cm}
\end{figure}

\subsubsection{Planar vorticity transport} \label{vortex}
As a third application of local $\mathbb{S}^1$-control, let us consider the vorticity transport PDE system of the form.
\begin{equation*}
\left\{
\begin{array}{l}
\partial_t \omega=-u\partial_x \omega-v\partial_y \omega+\frac{1}{Re}\Delta \omega\\
u=\partial_y \psi,\\
v=-\partial_x \psi\\
\Delta \psi=-\omega\\
BIC(\omega,\psi,u,v)=0
\end{array}
\right.
\end{equation*}
For Reynolds number $Re=400$ and suitable boundary and initial conditions represented by the system of algebraic differential equaitons $BIC$.

The simulation is computed using second order finite difference method combined with fourth order Runge-Kuta method. The computation is performed using the following commands.
\begin{verbatim}
>> [x,y,data,Cx,Sx]=CL_ROM_VortexDS(1,10,[0,0.15],[100,750],[1,3]);
-------------------------------------------------------------
                     Running simulation:
-------------------------------------------------------------
-------------------------------------------------------------
 Computing circular matrix representations in C[U[v1|vm]]:
-------------------------------------------------------------
Elapsed time is 2.08992 seconds.
-------------------------------------------------------------
Verifying circular mimetic constraints for C[U[v1|vm]]:
-------------------------------------------------------------
max{||Kx Rx Ux^k Tx X0-Yk|| | 1<=k<=m} = 8.2096e-13 <= eps
-------------------------------------------------------------
For m = 76
For n = 34501
For eps = 8.2096e-13
Elapsed time is 111.598 seconds.
-------------------------------------------------------------
\end{verbatim}

The graphical output is shown in \cref{vortex_DS_output}.

\begin{figure}[h!]
\begin{center}\vspace{1cm}
\includegraphics[width=0.77\linewidth]{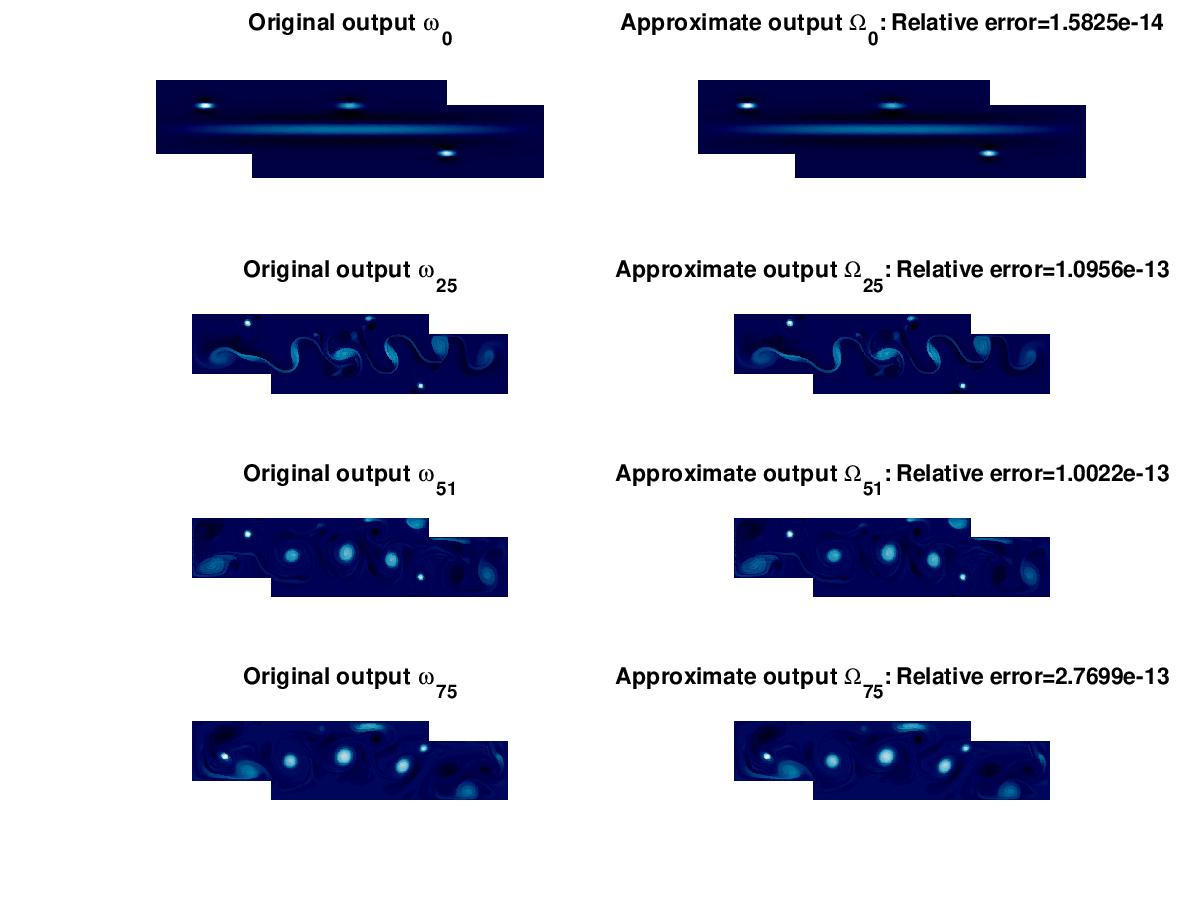}
\caption{$\Sigma$ and $\hat{\Sigma}$ output histories}
\label{vortex_DS_output}
\end{center}\vspace{1cm}
\end{figure}

\section{Discussion}

From a topological perspective, the notion of topological control that we propose in this document can be seen as an extension of the Torus Trick presented by R. Kirby in \cite{Kirby} to algebraic matrix sets in the sense of \cite{Vides_algebraic}. This extension and the corresponding 
matrix computations, were partially inspired by some questions raised by M. H. Freedman along the lines of \cite{Freedman}.

In order to perform the previously mentioned computations, we start embedding the vector history of a given discrete-time system under study into a 
manifold, where we can then we use elementary tools from matrix analysis and representation theory, to compute matrix analogies of the surgical cuts corresponding to Kirby's torus trick, these matrix surgical cuts have a direct effect on the spectrum of the CSF corresponding to the vector history of the corresponding embedded system.

Once we perform the previously mentioned surgical cuts on the spectrum of the unitary matrices that model the dynamical behaviour of the embedded system under study, the computation of the transition matrices of the embedded system can be easily and efficiently computed in terms of the topologically pre-processed matrices.

Another interesting effect of the aformentioned matrix surgical procedures, consists on the reduction of the group action that determines the global behaviour of the system, to a finite group action that can be efficiently computed using \cref{alg:main_alg}, without aditional computational cost due to the additional liftings in the original state (matrix realization) space, whose large dimension can make the standard lifting impossible to compute. This approach was inspired by the work of M. Rieffel in \cite{Rieffel}, and will be the subject of further study.

\subsection{Forthcoming Research}

We will further explore the numerical solvability of \eqref{main_matrix_equation} together with some additional contraints.  

We will improve the computational implementation of the prototypical \cref{alg:main_alg}, extending the topological control techniques presnted in this document to higher dimensional compact manifolds like $\mathbb{S}^1\times \mathbb{S}^1$, $\mathbb{S}^n$, and so forth. 

We will implement AI tools like TensorFlow to take advantage of the model training capabilities predicted by the proofs of \cref{top_geo_control} and \cref{top_alg_control}.

Besides setting the bases for \cref{alg:main_alg}, the family $\{f_0,f_2,\ldots,f_T\}\subset \mathbb{C}[z]$ whose existence and computability is guaranteed by \cref{top_alg_control}, provides a natural way to compress the {\bf \em mean dynamical behaviour} of a given discrete-time system $(\Sigma,\{\Theta_t\})$. 

The information compression property of $\{f_0,f_2,\ldots,f_T\}$ provides a natural connection to video streaming, this connection will be the subject of further study and experimentation. We will also explore further connections to classical and quantum finite automata.

\section{Conclusions}
\label{sec:conclusions}

Given $\varepsilon,\delta>0$, and a state $X_t$ of of discrete time almost periodic system $(\Sigma,\{\Theta_t\})$ that is $\varepsilon$-approximated by a SCL-ROM $\tilde{\Sigma}$ determined by a topological control $(\mathbb{M},Z,K,T,\varphi,\{f_t\})$ of $(\Sigma,\{\Theta_t\})$, the learning cost of a model update does not exceed the solving cost of the problem: 
\[
\mathrm{arg~min}_{p\in \mathbb{C}[z]}\{\|X_t-Kp(Z)TX_0\|~~|~~ \deg(p)\leq k\}
\]
for some $X_0\in \Sigma$, where $k$ is the control order. The application of this training technique to the extraction of (almost) periodic patterns from sampled-data discrete-time industrial systems and plants, will be further explored.

The family $\{f_1,f_2,\ldots,f_T\}\subset \mathbb{C}[z]$ derived from the implementation of \cref{alg:main_alg} provides an effective way to compress the {\bf \em mean dynamical behaviour} of a given discrete-time sampled-data system $\Sigma$.

%\pagebreak
%\appendix
%\section{An example appendix} 

%\begin{lemma}
%Test Lemma.
%\end{lemma}

\section*{Acknowledgments}
The numerical experiments that provided insight and motivation for the results presented in this report, together with its applications to structure preserving matrix approximation, were performed in the Scientific Computing Innovation Center ({\bf CICC-UNAH}), of the National Autonomous University of Honduras.

I am grateful with Terry Loring, Stanly Steinberg, Marc Rieffel, Concepci\'on Ferrufino, Leonel Obando, Mario Molina and William F\'unez for several interesting questions and comments, that have been very helpful for the preparation of this document.


\begin{thebibliography}{1}
\bibliographystyle{siamplain}
%\bibliography{references}
\bibitem{bilinear_systems} Elliott D. L. (2009). Bilinear Control Systems Matrices in Action. Applied Mathematical Sciences Vol. 169. Springer.


\bibitem{Freedman} Freedman M. H. and Press W. H. (1996). Truncation of Wavelet Matrices: Edge Effects and the
Reduction of Topological Control Linear Algebra Appl. 2:34:1-19 (1996)

\bibitem{Kirby} Kirby R. C. (1969). Stable homeomorphisms and the annulus conjecture. Ann. of Math., Second
Series, Vol. 89, No. 3 (May, 1969), pp. 575-582
 
 \bibitem{DMD_Kutz} {Proctor J. L., Brunton S. L., Kutz J. N.} \newblock{Dynamic Mode Decomposition with Control.} \newblock{SIAM J Appl. Dyn. Syst. 2016, Vol. 15, No. 1, pp. 142–161}
 
 \bibitem{Rieffel} { Rieffel M. A.} \newblock{Actions of finite groups on C*-algebras.} \newblock{\em Math. Scand.} 
 47 (1980), 157-176
 
 \bibitem{DMD_Schmid} {Schmid P. J.} \newblock{Dynamic mode decomposition of numerical and experimental data.} \newblock{J. Fluid Mech. (2010), vol. 656, pp. 5-28.}
 
\bibitem{Vides_algebraic} {Vides F.} On Uniform Connectivity of Algebraic Matrix Sets. To appear in Banach. J. Math. Anal.
\end{thebibliography}
\end{document}